\documentclass[11pt,a4paper]{article}
\usepackage[utf8]{inputenc}
\usepackage{amsmath}
\usepackage{amsfonts}
\usepackage{amssymb}
\usepackage{amsthm}
\usepackage{mathtools}
\usepackage[plain]{fullpage}
\usepackage[numbers,sort&compress,longnamesfirst]{natbib}
\usepackage{mdwlist}
\usepackage{doi}

\newcommand{\floor}[1]{\lfloor{#1}\rfloor}

\newcommand{\tw}{\textsf{\textup{tw}}}
\newcommand{\had}{\textsf{\textup{had}}}

\newcommand{\con}{\textsf{\textup{con}}}
\newcommand{\pw}{\textsf{\textup{pw}}}
\newcommand{\cw}{\textsf{\textup{cw}}}
\newcommand{\dd}{\textsf{\textup{d}}}
\newcommand{\zz}{\textsf{\textup{z}}}
\newcommand{\ee}{\textsf{\textup{e}}}
\newcommand{\bb}{\textsf{\textup{b}}}

\newtheorem{theorem}{Theorem}[section]
\newtheorem{lemma}[theorem]{Lemma}
\newtheorem{corollary}[theorem]{Corollary}
\newtheorem{proposition}[theorem]{Proposition}
\newtheorem{conjecture}[theorem]{Conjecture}

\theoremstyle{remark}
\newtheorem{claim}{Claim}[section]

\theoremstyle{definition}

\allowdisplaybreaks

\begin{document}
\title{\bf The Treewidth of Line Graphs}

\author{Daniel~J.~Harvey\,\footnotemark[2] \qquad David~R.~Wood\,\footnotemark[2]}

\date{\today}

\maketitle

\footnotetext[2]{School of Mathematical Sciences, Monash University, Melbourne, Australia. \phantom{\footnotetext[2]} \texttt{\{Daniel.Harvey,David.Wood\}@monash.edu}. Supported by the Australian Research Council.}

\begin{abstract}
The treewidth of a graph is an important invariant in structural and algorithmic graph theory. This paper studies the treewidth of \emph{line graphs}. We show that determining the treewidth of the line graph of a graph $G$ is equivalent to determining the minimum vertex congestion of an embedding of $G$ into a tree. Using this result, we prove sharp lower bounds in terms of both the minimum degree and average degree of $G$. These results are precise enough to exactly determine the treewidth of the line graph of a complete graph and other interesting examples. We also improve the best known upper bound on the treewidth of a line graph. Analogous results are proved for pathwidth.
\end{abstract}

\section{Introduction}
\label{sec:intro}
Treewidth is a graph parameter that measures how ``tree-like" a graph is. It is of fundamental importance in structural graph theory (especially in the graph minor theory of \citet{RS-GraphMinors}) and in algorithmic graph theory, since many NP-complete problems are solvable in polynomial time on graphs of bounded treewidth \citep{Bodlaender-AC93}. Let $\tw(G)$ denote the treewidth of a graph $G$ (defined below). This paper studies the treewidth of \emph{line graphs}.
For a graph $G$, the line graph $L(G)$ is the graph with vertex set $E(G)$ where two vertices are adjacent if and only if their corresponding edges are incident. 
(We shall refer to vertices in the line graph as edges---vertices shall refer to the vertices of $G$ itself unless explicitly noted.)

As a concrete example, the treewidth of $L(K_n)$ is important in recent work by \citet{marxgrohe} and \citet{marx}. Specifically, \citet{marx} showed that if $\tw(G) \geq k$ then the lexicographic product of $G$ with $K_p$ contains the lexicographic product of $L(K_k)$ with $K_q$ as a minor (for choices of $p$ and $q$ depending on $|V(G)|$ and $k$). Motivated by this result, the authors determined the treewidth of $L(K_n)$ exactly \citep{mylinegraph-jgt}. The techniques used were extended to determine the treewidth of the line graph of a complete multipartite graph up to lower order terms, with an exact result when the complete multipartite graph is regular \citep{mythesis}. These results also extend to pathwidth (since the tree decompositions constructed have paths as the underlying trees.)

\paragraph{Lower Bounds.}
The following are two elementary lower bounds on $\tw(L(G))$. First, if $\Delta(G)$ is the maximum degree of $G$, then $\tw(L(G)) \geq \Delta(G)-1$ since the edges incident to a vertex in $G$ form a clique in $L(G)$. Second, given a minimum width tree decomposition of $L(G)$, replace each edge with both of its endpoints to obtain a tree decomposition of $G$. It follows that
\begin{equation}
\label{eq:easylb}
\tw(L(G)) \geq \tfrac{1}{2}(\tw(G)+1)-1.
\end{equation} 

We prove the following lower bound on $\tw(L(G))$ in terms of $\dd(G)^2$, where $\dd(G)$ is the average degree of $G$.
\begin{theorem}
\label{theorem:avgdegintro}
For every graph $G$ with average degree $\dd(G)$, $$\pw(L(G)) \geq \tw(L(G)) > \tfrac{1}{8}\dd(G)^2 + \tfrac{3}{4}\dd(G) - 2.$$
\end{theorem}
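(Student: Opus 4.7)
The inequality $\pw(L(G)) \geq \tw(L(G))$ holds for every graph. For the main bound, my plan is to reduce to the minimum-degree version of the theorem and then invoke the equivalence between line-graph treewidth and tree vertex congestion established earlier in the paper. First, by the classical averaging argument, $G$ has a subgraph $H$ with $\delta(H) \geq \dd(G)/2$: iteratively delete any vertex of degree less than $\dd(G)/2$; the total degree removed is less than $|V(G)|\cdot\dd(G)/2 = |E(G)|$, so the process terminates with a nonempty subgraph. Since $L(H) \subseteq L(G)$, we have $\tw(L(G)) \geq \tw(L(H))$. Assuming the companion min-degree theorem gives $\tw(L(H)) > \delta(H)^2/2 + 3\delta(H)/2 - 2$, substituting $\delta(H) \geq \dd(G)/2$ into this expression and using the monotonicity of $x \mapsto x^2/2 + 3x/2 - 2$ for $x \geq 0$ yields exactly $\dd(G)^2/8 + 3\dd(G)/4 - 2$, which is the theorem's bound.

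The substantive work is the min-degree statement. Using the equivalence, fix a tree embedding $f\colon V(H) \to V(T)$ (with each edge $uv$ routed along the tree path from $f(u)$ to $f(v)$) achieving the minimum possible maximum vertex congestion $c = \tw(L(H)) + 1$. Choose $t_0 \in V(T)$ to be a centroid of $T$ weighted by $|f^{-1}(t) \cap V(H)|$, so every component of $T - t_0$ holds at most $|V(H)|/2$ vertices of $H$; let $V_0 = f^{-1}(t_0) \cap V(H)$ and let $V_1, \ldots, V_k$ be the vertices of $H$ mapped into the respective components. Every edge of $H$ whose tree path meets $t_0$---equivalently, every edge not lying entirely in some $V_i$ with $i \geq 1$---contributes to the congestion at $t_0$, so $c \geq |E(H)| - \sum_{i \geq 1}|E(H[V_i])|$.

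The final step is a careful upper bound on $\sum_{i \geq 1}|E(H[V_i])|$ that couples two estimates: (a) convexity of $\binom{\cdot}{2}$ combined with the centroid balance $|V_i| \leq |V(H)|/2$; and (b) the min-degree double-count $\sum_{v \in V_i}\deg_H(v) \geq |V_i|\delta(H)$, which, since the edges leaving $V_i$ also contribute to the congestion at $t_0$, feeds back into the same inequality $c \geq \cdots$. The main obstacle is getting the exact constants: a naive convexity argument alone misses the claimed coefficient $1/2$ in the min-degree bound by a factor of two, so one must couple (a) and (b) via an optimization in $|V_i|$ and $\delta(H)$. The precise coefficients $\tfrac{1}{2}$ and $\tfrac{3}{2}$ in the min-degree bound---and hence $\tfrac{1}{8}$ and $\tfrac{3}{4}$ in the average-degree bound---emerge from this optimization, after using $|E(H)| \geq |V(H)|\delta(H)/2$ and solving the resulting quadratic.
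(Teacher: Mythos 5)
Your proposal has a fundamental arithmetic gap that makes the reduction unsound: you substitute $\delta(H)\ge \dd(G)/2$ into a \emph{claimed} companion bound $\tw(L(H))>\tfrac12\delta(H)^2+\tfrac32\delta(H)-2$ in order to land exactly on the target coefficients $\tfrac18$ and $\tfrac34$. But the paper's actual min-degree theorem (Theorem~\ref{theorem:mindegintro}) has leading coefficient $\tfrac14$, not $\tfrac12$: it says $\tw(L(G))\ge \tfrac14\delta(G)^2+\delta(G)-1$ (or $-\tfrac54$ in the odd case). The stronger bound you assert is in fact false. The paper itself exhibits $C_n^k$, where $\delta=2k$ and $\pw(L(C_n^k))\le k^2+2k-1$; your claimed bound would force $\tw(L(C_n^k))>2k^2+3k-2$, which contradicts $\tw\le\pw$ for every $k\ge 1$. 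With the true coefficient $\tfrac14$, plugging $\delta(H)\ge\dd(G)/2$ yields only $\tfrac{1}{16}\dd(G)^2+\tfrac12\dd(G)-1$, which is weaker than the theorem's $\tfrac18\dd(G)^2+\tfrac34\dd(G)-2$ for large $\dd(G)$. In short, the route through a min-degree subgraph at threshold $\dd(G)/2$ loses a factor of $4$ in the quadratic term, and only a factor of $2$ is recoverable from the min-degree bound.

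The paper avoids this by never passing through minimum degree. It instead takes a \emph{minimal} subgraph $H$ (meaning $\dd(H-S)<\dd(H)$ for every nonempty proper $S$) with $\dd(H)\ge\dd(G)$, and proves a lemma that for every such $S$, the number of edges with an endpoint in $S$ exceeds $\tfrac12|S|\dd(H)$. This gives edge-boundary information for \emph{every} subset, not just the singleton information $\delta>\dd/2$ that a greedy deletion argument yields. That extra leverage is precisely what produces the coefficient $\tfrac18$ rather than $\tfrac1{16}$. Concretely, after invoking the nice tree decomposition of Theorem~\ref{theorem:goodtd} (binary tree, vertices of $G$ injected onto the leaves as base nodes), the paper locates a ``significant'' internal node $u$ whose two child subtrees each carry at most $\tfrac12\dd(G)$ base nodes while the two together carry more than $\tfrac12\dd(G)$; the bag at $u$ is then bounded below by the minimality lemma applied to the union of the two child vertex-sets. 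Your centroid-of-$T$ choice (balancing against $|V(H)|/2$) is at the wrong scale --- the balance point needs to be calibrated to $\dd(G)/2$, not to $|V(H)|/2$, and the subsequent counting is done against a fixed pair of parts $A,B$ from the two children of a binary node, not against a variable number of components. The congestion-at-a-node identity $c\ge|E(H)|-\sum_i|E(H[V_i])|$ you wrote is fine, but the optimization you would need to run afterward does not close with the coefficients claimed; the ``(a) plus (b)'' coupling you gesture at is where the paper's minimality lemma does the genuine work and your sketch does not supply a substitute.
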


The bound in Theorem~\ref{theorem:avgdegintro} is within `$+1$' of optimal since we show that for all $k$ and $n$ there is an $n$-vertex graph $G$ with $\dd(G) \approx 2k$ and $\tw(L(G)) \leq \pw(L(G)) = \frac{1}{8}(2k)^2 + \frac{3}{4}(2k) -1$. All these results are proven in Section~\ref{section:avgdeg}.

We also prove a sharp lower bound in terms of $\delta(G)^2$, where $\delta(G)$ is the minimum degree of $G$. (The constants in Theorem~\ref{theorem:avgdegintro} and \ref{theorem:mindegintro} are such that, depending on the graph, either result could be stronger.)

\begin{theorem}
\label{theorem:mindegintro}
For every graph $G$ with minimum degree $\delta(G)$,
$$\pw(L(G)) \geq \tw(L(G)) \geq 
\begin{cases}
\frac{1}{4}\delta(G)^2+\delta(G) -1 &\text{ when $\delta(G)$ is even}\\
\frac{1}{4}\delta(G)^2+\delta(G)-\frac{5}{4} &\text{ when $\delta(G)$ is odd}.
\end{cases}$$
\end{theorem}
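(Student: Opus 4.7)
I plan to use the tree-congestion characterization of $\tw(L(G))$ set up earlier in the paper: it suffices to exhibit, for every embedding $f\colon V(G)\to V(T)$ (with each $G$-edge routed along the unique path in $T$ between its endpoints' images), a node $t\in V(T)$ whose vertex-congestion $c(t)$ is at least $\tfrac14\delta^2+\delta$ in the even case (resp.\ $\tfrac14\delta^2+\delta-\tfrac14$ in the odd case). For any node $t$, writing $V_0=f^{-1}(t)$ and $V_1,\ldots,V_k$ for the vertex-sets contained in the subtree-components of $T-t$, counting edges of $G$ whose tree-path contains $t$ gives the two equivalent identities
\[
c(t)\;=\;|E(G)|\,-\,\sum_{i\ge 1}e_G[V_i]\;=\;\tfrac12\!\left(\sum_{v\in V_0}\deg_G(v)\,+\,\sum_{i\ge 1}e_G(V_i,\,V(G)\setminus V_i)\right)\!.
\]
The first form is most useful when $G$ is dense (in particular $G=K_{\delta+1}$), and the second is tailored for exploiting the minimum-degree hypothesis via the isoperimetric lower bound $e_G(V_i,V(G)\setminus V_i)\ge |V_i|(\delta-|V_i|+1)$.

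The core of the argument is a selection step: choose $t$ so that one of the $|V_i|$ (for some $i\ge 1$) sits in the window $\{\lfloor\delta/2\rfloor+1,\lceil\delta/2\rceil+1\}$. I would do this by walking along $T$ from a leaf inward and tracking the ``growing'' subtree-size, which changes by the $|f^{-1}|$-weight of the absorbed node at each step. Either some intermediate step lands in the window, in which case the isoperimetric lower bound on $e_G(V_i,V(G)\setminus V_i)$ is near its maximum value $\tfrac14\delta^2+\tfrac12\delta$ and combines with the handshake contribution from $V_0$ to yield the target; or the walk overshoots the window in a single step, which forces the absorbed node to satisfy $|f^{-1}(t)|\ge 2$, and then the handshake term alone (contributing at least $\tfrac12\cdot 2\delta=\delta$ per extra vertex in $V_0$) supplies the missing increment.

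The main obstacle will be the small-$n$ extremal regime, typified by $G=K_{\delta+1}$, where the window cannot be attained while leaving room on the complementary side. I plan to handle this by working at a centroid $t^*$ of $T$ (every subtree of $T-t^*$ has at most $n/2$ vertices) and applying the complete-graph form $c(t^*)=\binom{n}{2}-\sum_{i\ge 1}\binom{|V_i|}{2}$: convex maximisation of the subtracted sum over integer compositions with each part of size at most $\lfloor n/2\rfloor$ is attained by a two-part partition with $V_0=\emptyset$, and a short direct computation produces exactly the parity-split constants $\tfrac14\delta^2+\delta-1$ and $\tfrac14\delta^2+\delta-\tfrac54$ claimed. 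In the symmetric adversarial configuration where this optimum is realised, $T$ has a centroid edge and the adjacent centroid node has partition $(|V_0|,|V_1|,|V_2|)=(1,\lfloor n/2\rfloor-1,\lfloor n/2\rfloor)$, which is strictly more favourable; applying the identity there closes the bound.
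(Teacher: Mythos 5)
Your proposal shares the congestion/embedding viewpoint with the paper, but the selection step and the resulting bound diverge from the paper's argument in ways that leave genuine gaps.

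\paragraph{The paper's selection step.} The paper applies Theorem~\ref{theorem:goodtd} to get a binary tree $T$ with base nodes exactly at the leaves, then picks (by a root-to-leaf traversal) a non-root, non-leaf node $u$ whose two \emph{child} subtrees $A,B$ both satisfy $|A|,|B|\le\frac12\delta$ while $|A|+|B|>\frac12\delta$. This yields \emph{two} small parts, and the bag size is bounded via $|X|=|\ee(A,V\setminus A)|+|\ee(B,V\setminus B)|-|\ee(A,B)|$, with the isoperimetric bound $|\ee(S,V\setminus S)|\ge|S|(\delta-|S|+1)$ applied to \emph{both} $A$ and $B$. The final optimisation over $(|A|,|B|)$ in Appendix~B is what produces the parity constants.

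\paragraph{Where your plan breaks.} Your window-selection heuristic has three problems that are not presentation issues.

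First, the step-size claim (``changes by the $|f^{-1}|$-weight of the absorbed node'') is only correct when $T$ is a path. In a general sub-cubic tree, advancing from $t$ to a neighbour $t'$ increases the ``behind'' component by $|f^{-1}(t)|$ \emph{plus} the sizes of all other subtree-components at $t$ (the side branches you leave behind). Since the extremal embedding (by Theorem~\ref{theorem:goodtd}) has $|f^{-1}(t)|=0$ at every internal node, the jump is driven entirely by side branches, so you can miss the window without ever having $|f^{-1}(t)|\ge2$. Your overshoot fallback therefore never applies in the very embedding you must lower-bound.

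Second, even when you do land on a component $V_i$ with $|V_i|=\lfloor\delta/2\rfloor+1$, the single isoperimetric term gives only $|V_i|(\delta-|V_i|+1)\approx\frac14\delta^2+\frac12\delta$, which is short of the target $\frac14\delta^2+\delta$ by about $\frac\delta2$. With base nodes at leaves, the ``handshake contribution from $V_0$'' is identically zero at internal nodes, so the missing $\frac\delta2$ cannot come from $V_0$. It must come from the edges between the remaining two subtree-components $V_2,V_3$, and lower-bounding $e_G(V_2,V_3)$ requires knowing that at least one of $V_2,V_3$ is small — a second constraint your selection step never establishes. (Note also that the upper end of your odd-case window, $|V_i|=\lceil\delta/2\rceil+1=(\delta+3)/2$, gives $|V_i|(\delta-|V_i|+1)=\frac14\delta^2+\frac12\delta-\frac34$, which is \emph{worse} and cannot recover to $\frac14\delta^2+\delta-\frac14$ by the same worst-case $+\frac\delta2-1$ slack; the window is too wide.)

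Third, the ``small-$n$'' fix via a centroid and convex maximisation is tailored to $K_{\delta+1}$ (where $c(t)=\binom n2-\sum\binom{|V_i|}2$ is available) and does not obviously extend to a general $G$ of minimum degree $\delta$, which is what the theorem needs.

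\paragraph{Summary.} The essential idea that closes the argument in the paper is to locate a node that separates \emph{two} subsets $A,B$, each of size $\le\frac12\delta$ and together of size $>\frac12\delta$, and apply the isoperimetric bound to both. Your plan controls the size of only one component and is left with a $\frac\delta2$ deficit that the stated mechanisms ($V_0$ handshakes, $|f^{-1}(t)|\ge2$) cannot supply in the extremal embedding. To repair it you would in effect need to re-derive the paper's significant-node structure, so as written there is a genuine gap.
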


The bound in Theorem~\ref{theorem:mindegintro} is sharp since for all $n$ and $k$ we describe a graph $G$ with $n$ vertices and minimum degree $k$ such that $\pw(L(G))$ equals the bound in Theorem~\ref{theorem:mindegintro} when $n$ is even or $k$ is even, and is within `$+1$' when $n$ is odd and $k$ is odd. All these results are proven in Section~\ref{section:mindeg}.

A weaker version of Theorem~\ref{theorem:mindegintro} first appeared in the first author's PhD thesis \citep{mythesis}. Theorems~\ref{theorem:avgdegintro} and \ref{theorem:mindegintro} are significant improvements for line graphs over the standard results that $\tw(G) \geq \delta(G)$ and $\tw(G) > \frac{1}{2}\dd(G)$ (which hold for all graphs), since $\delta(L(G)),\dd(L(G)),\delta(G)$ and $\dd(G)$ can be quite close. For example, $\delta(L(G))=\dd(L(G))=2\,\delta(G)-2=2\dd(G)-2$ when $G$ is regular.

In order to prove these results, we first show (in Section~\ref{sec:embed}) that constructing a tree decomposition of $L(G)$ is equivalent to determining a particular embedding of $G$ into a tree. This in turn allows us to prove a strong relationship between the treewidth of $L(G)$ and the vertex congestion of $G$, together with a similar relationship for the pathwidth of $L(G)$ and the vertex congestion of $G$ when embedded into a path. This second relationship is similar to a previous result relating $\pw(L(G))$ and cutwidth established by \citet{Golovach}. 

In Section~\ref{sec:altlow} we show that Theorems~\ref{theorem:avgdegintro} and~\ref{theorem:mindegintro} cannot be improved by replacing one of the $\dd(G)$ (or $\delta(G)$) terms by $\tw(G)$. 

Finally, we mention a related conjecture of Seymour, which was recently proved by \citet{immersion} using the theory of \emph{immersions}. It states that, given a graph $G$ with average degree $\dd(G)$, the Hadwiger number of $L(G)$ satisfies $\had(L(G)) \geq c\,\dd(G)^{\frac{3}{2}}$ for some constant $c>0$. They also show that the exponent $\frac{3}{2}$ is sharp due to the complete graph. Given that $\tw(L(G)) \geq \had(L(G))$, this gives a lower bound on $\tw(L(G))$ in terms of $\dd(G)^{\frac{3}{2}}$.

\paragraph{Upper Bounds.}
Now consider upper bounds on $\tw(L(G))$. Equivalent results by \citet{Bienstock,atserias} and \citet{calinescu} all show that \begin{align}\label{eq:atsbound}\tw(L(G)) \leq (\tw(G)+1)\Delta(G)-1.\end{align} To see this, consider a minimum width tree decomposition of $G$, and replace each bag $X$ by the set of edges incident with a vertex in $X$. This creates a tree decomposition of $L(G)$, where each bag contains at most $(\tw(G)+1)\Delta(G)$ edges. A similar argument can be used to prove that \begin{align}\label{eq:atsboundpw}\pw(L(G)) \leq (\pw(G)+1)\Delta(G)-1.\end{align} In Section~\ref{sec:upper}, we establish the following improvement.

\begin{theorem}
\label{theorem:maxdegintro}
For every graph $G$,
\begin{align*}\tw(L(G)) &\leq \tfrac{2}{3}(\tw(G)+1)\Delta(G) + \tfrac{1}{3}\tw(G)^2 + \tfrac{1}{3}\Delta(G) -1 \text{, and}\\
\pw(L(G)) &\leq \tfrac{1}{2}(\pw(G)+1)\Delta(G) + \tfrac{1}{2}\pw(G)^2 + \tfrac{1}{2}\Delta(G) -1.\end{align*}
\end{theorem}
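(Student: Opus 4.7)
The plan is to construct a tree (resp.\ path) decomposition of $L(G)$ from an optimal one of $G$, with bags smaller by a factor of roughly $\tfrac{2}{3}$ (resp.\ $\tfrac{1}{2}$) than those produced by the naive construction giving $(\tw(G){+}1)\Delta(G){-}1$. The naive bag at $t$---namely $\{e \in E(G) : e \cap X_t \neq \emptyset\}$---loses roughly a factor of $2$ because every vertex $v \in X_t$ contributes its entire clique of $d_v \le \Delta(G)$ incident edges to every bag containing $v$. The saving will come from releasing the full $v$-clique at a single designated node per vertex.

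Concretely, I would fix a tree decomposition $(T,(X_t))$ of $G$ of width $k := \tw(G)$, root it arbitrarily, and for each $v \in V(G)$ let $h(v)$ be the root of $T_v := \{t : v \in X_t\}$, so that $v \mapsto h(v)$ partitions $V(G)$ by ``top of lifespan.'' Refine $T$ to a new tree $T^\star$ by expanding each node $t$ into a short path whose nodes are the vertices $A_t := \{v : h(v)=t\}$ in some order to be chosen, and connect the expansions of adjacent $t$'s end-to-end. By the embedding--tree-decomposition equivalence proved in Section~\ref{sec:embed}, it then suffices to bound the vertex congestion of the embedding that places each $v$ at its node in $T^\star$ and routes each edge $uv$ along the unique $T^\star$-path between the placements of $u$ and $v$.

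The core computation is bounding the vertex congestion at each $\tau \in V(T^\star)$. The edges using $\tau$ split into (i) the $\le \Delta(G)$ edges incident to the unique vertex $v$ placed at $\tau$, and (ii) ``transit'' edges between vertices placed on opposite sides of $\tau$ in $T^\star$. A separator argument based on the tree-decomposition property shows that every transit edge has at least one endpoint in $X_t$, where $\tau$ lies in the expansion of $X_t$; the precise transit count at position $i$ within that expansion is controlled by quadratic expressions in $i$, $|A_t| \le k{+}1$, and $\Delta(G)$. Choosing the expansion order at each bag to balance the local contribution (i) against the position-dependent transit count (ii) yields the stated constants. The pathwidth case is cleaner because a single global ordering of $V(G)$ replaces the per-bag ordering, and this added flexibility is exactly what improves $\tfrac{2}{3}$ to $\tfrac{1}{2}$.

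The main obstacle is the precise bookkeeping near the ``ends'' of each expansion, where edges incident to $X_t$ with the other endpoint outside $X_t$ must be attributed either to the parent-side or a child-side of $t$. Obtaining the lower-order terms $\tfrac{1}{3}k^2 + \tfrac{1}{3}\Delta - 1$ (resp.\ $\tfrac{1}{2}k^2 + \tfrac{1}{2}\Delta - 1$) exactly as stated requires distinguishing vertices in $A_t$ from those in $X_t \setminus A_t$ and arguing that the expansion order can be chosen so that high-degree vertices always land at positions with small transit counts, uniformly across the whole decomposition; for treewidth the branching at join nodes of $T$ adds a further case analysis that is responsible for the gap between the tree and path constants.
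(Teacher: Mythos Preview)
Your placement rule $v \mapsto h(v)$ (the root of $T_v$) is the wrong choice and does not achieve the stated constants. Consider $G=K_{k,n}$ with $k\ge 3$ and the standard path decomposition with bags $X_j=\{u_1,\dots,u_k,v_j\}$. Rooting at $X_1$ gives $A_1=\{u_1,\dots,u_k,v_1\}$ and $A_j=\{v_j\}$ for $j\ge 2$, so in every ordering compatible with your expansion all of $u_1,\dots,u_k$ precede $v_2$. At the node $\tau=v_2$ every edge $u_iv_j$ with $j\ge 2$ crosses, giving congestion $k(n-1)$, whereas the theorem promises roughly $\tfrac12(k+2)n$. For $k\ge 3$ this already exceeds the bound, and no reordering within the $A_t$'s can help because the $u_i$'s are forced into $A_1$. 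Your assertion that ``the precise transit count at position $i$ \dots\ is controlled by quadratic expressions in $i$, $|A_t|$, and $\Delta(G)$'' is false: the transit count also depends on how many neighbours of each $w\in X_t\setminus A_t$ lie to the right of $\tau$, a global quantity that can be as large as $(|X_t|-1)\Delta$.

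The missing idea is that each vertex must be placed at a \emph{balanced} point of $T_v$, not at its root. The paper first forces $T$ to be sub-cubic, and then for each large $v$ finds an edge $e\in T_v$ whose removal leaves the neighbours of $v$ split so that each side has at most $\tfrac23\deg(v)+\tfrac13(k-1)$ of them (or $\tfrac12\deg(v)+\tfrac12(k-1)$ when $T_v$ is a path); $v$ is then placed at a subdivision of $e$. This balancing is exactly what caps the contribution of each $w\in X_t$ at a bag $X'\neq \bb(w)$ by $\tfrac23\Delta+\tfrac13(k-1)$ rather than $\Delta$. The gap between $\tfrac23$ and $\tfrac12$ comes from the maximum degree of $T$ (three branches versus two in Claim~\ref{claim:agoodedge}), not from any ``added flexibility'' of a global ordering.
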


Theorem~\ref{theorem:maxdegintro} is of primary interest when $\Delta(G) \gg \tw(G)$ or $\Delta(G) \gg \pw(G)$, in which case the upper bounds are $(\frac{2}{3}+o(1))\Delta(G)\tw(G)$ and $(\frac{1}{2}+o(1))\Delta(G)\pw(G)$. When $\Delta(G) < \tw(G)$ or $\Delta(G) < \pw(G)$, the bounds in \eqref{eq:atsbound} and \eqref{eq:atsboundpw} are better than those in Theorem~\ref{theorem:maxdegintro}.

In Section~\ref{sec:bipartite}, we show that this upper bound on $\pw(L(G))$ is sharp ignoring lower order terms. The key example here is $G=K_{p,q}$, which is of independent interest. Since $\tw(K_{p,q})=\pw(K_{p,q})=q$ and $\Delta(K_{p,q}) = p$ for $p \geq q$, Theorem~\ref{theorem:maxdegintro} implies that $\pw(L(K_{p,q})) \leq (\tfrac{1}{2} + o(1))pq$. Hence the following theorem is sufficient.
\begin{theorem}
\label{theorem:introbipart}
For all $p \geq q \geq 1$, $$\tfrac{1}{2}pq - 1\leq \tw(L(K_{p,q})) \leq \pw(L(K_{p,q})).$$
\end{theorem}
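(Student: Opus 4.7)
The upper bound $\tw(L(K_{p,q})) \leq \pw(L(K_{p,q}))$ is automatic, so the task is to establish the lower bound $\tw(L(K_{p,q})) \geq \tfrac{1}{2}pq - 1$. My plan is to invoke the equivalence, established in Section~\ref{sec:embed}, between $\tw(L(G))$ and the minimum vertex congestion of an embedding of $G$ into a tree. This reduces the problem to showing that every embedding of $K_{p,q}$ into a tree has vertex congestion at least $\tfrac{1}{2}pq$.

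Given an embedding $\phi$ of $K_{p,q}$ into a tree $T$, I would apply the classical centroid lemma (treating each $K_{p,q}$-vertex as a unit weight at its image in $T$) to obtain a vertex $t \in V(T)$ such that each component of $T - t$ contains at most $\lfloor (p+q)/2 \rfloor$ mapped vertices. Writing $A \cup B$ for the bipartition of $K_{p,q}$ with $|A| = p$ and $|B| = q$, and letting $a_i, b_i$ count the $A$- and $B$-vertices mapped into the $i$th component of $T - t$, a short bookkeeping calculation shows that the number of $K_{p,q}$-edges whose routing passes through $t$ equals $pq - \sum_i a_i b_i$. So the task reduces to the optimization claim $\sum_i a_i b_i \leq \tfrac{1}{2}pq$ subject to $\sum a_i \leq p$, $\sum b_i \leq q$, and $a_i + b_i \leq (p+q)/2$ for each $i$.

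For this optimization I would run a Lagrangian/KKT analysis (or a direct swapping argument), pinpointing the maximum at the symmetric two-bin configuration $(a_1, b_1) = (a_2, b_2) = (p/2, q/2)$, with value exactly $pq/2$. The main obstacle is the asymmetric case $p \neq q$: the easy bound $a_i b_i \leq (a_i + b_i)^2/4$ yields only $(p+q)^2/8$, which strictly exceeds $pq/2$ when $p \neq q$, so the argument must genuinely combine the per-side mass constraints $\sum a_i \leq p$ and $\sum b_i \leq q$ with the capacity constraint. Once the optimization bound is in hand, combining it with the congestion characterization from Section~\ref{sec:embed} produces $\tw(L(K_{p,q})) \geq \tfrac{1}{2}pq - 1$, as required.
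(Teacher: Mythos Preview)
Your approach is correct but takes a genuinely different route from the paper. The paper does not invoke the congestion characterisation of Section~\ref{sec:embed} here at all; instead it works directly with $L(K_{p,q}) \cong K_p \,\square\, K_q$ and applies the Robertson--Seymour fact that every graph has a balanced separator of order $\tw(G)+1$. A separator $X$ of $K_p \,\square\, K_q$ induces a $3$-colouring of the rows and of the columns (since each row or column, being a clique, meets at most one part $A_i$), and the task becomes showing $|A_1|+|A_2|+|A_3| \le \tfrac12 pq$ subject to the balance constraints $|A_i| \le |A_j|+|A_k|$. That optimisation, carried out in Appendix~\ref{sec:bipartworking}, has a different shape from yours: the balancing is on the part sizes $|A_i|$ rather than on the bin masses $a_i + b_i$, and the variables are fractions $x_i, y_i, z_i$ with multiplicative couplings $z_i \le x_i y_i$.

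Your centroid-based reduction is arguably the more natural one given the machinery the paper builds in Section~\ref{sec:embed}, and the resulting optimisation is cleaner to state. But the step you flag as the main obstacle really is one: the KKT stationary point $(a_i,b_i)=(p/3,q/3)$ is a saddle, and the maximum lives on the boundary. A workable route is to exploit bilinearity---at the optimum the $a$'s sit at a vertex of their polytope for the given $b$'s and vice versa---which forces all but at most one bin to have $a_i+b_i \in \{0,M\}$ and collapses the problem to the two-bin configuration you identify. Carrying this through is comparable in effort to the paper's Appendix~\ref{sec:bipartworking}, so neither route is a shortcut over the other; they simply arrive at different (and independently nontrivial) three-bin optimisations.
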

Theorem~\ref{theorem:introbipart} extends a previous result of \citet{lucenabramble}, who determined $\tw(L(K_{n,n})$ exactly, and a previous result from the PhD thesis of the first author \citep{mythesis}, which determined upper and lower bounds on the treewidth of line graphs of complete multipartite graphs. The bounds in \citep{mythesis} are equal when the graphs are regular, and are close when the graphs are almost regular. However, they say nothing when $p \gg q$, which is handled by Theorem~\ref{theorem:introbipart}.

%

\section{Treewidth and the Congestion of Embeddings}
\label{sec:embed}

For a graph $G$, a \emph{tree decomposition} $(T,\mathcal{X})$ of $G$ is a tree $T$, together with $\mathcal{X}$, a collection of sets of vertices (called \emph{bags}) indexed by the nodes of $T$, such that: 
\begin{itemize*}
\item for all $v \in V(G)$, $v$ appears in at least one bag,
\item for all $v \in V(G)$, the nodes indexing the bags containing $v$ form a connected subtree of $T$, and 
\item for all $vw \in E(G)$, there is a bag containing both $v$ and $w$.
\end{itemize*}
(Often, we conflate a node and the bag indexed by that node, and refer to two bags being adjacent when their indexing nodes are adjacent and so on, for simplicity.)
The \emph{width} of a tree decomposition is the size of the largest bag, minus 1. The treewidth of $G$, denoted $\tw(G)$, is the minimum width over all tree decompositions of $G$. 

%
A path decomposition is a tree decompositions where the underlying tree is a path. Pathwidth $\pw$ is defined analogously to treewidth but with respect to path decompositions. 

Given a tree decomposition of $L(G)$ with underlying tree $T$, for each edge $vw$ of $G$, let $S_{vw}$ denote the subtree of $T$ induced by the bags containing $vw$. (Recall each bag contains vertices of $L(G)$, which are edges of $G$.) 

\begin{lemma}
\label{lemma:basenodes}
For every graph $G$ there exists a minimum width tree decomposition $(T,\mathcal{X})$ of $L(G)$ together with an assignment $\bb:V(G) \rightarrow V(T)$ such that for each edge $vw \in E(G)$, $S_{vw}$ is exactly the path in $T$ between $\bb(v)$ and $\bb(w)$. 
\end{lemma}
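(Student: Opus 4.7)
My plan is to start with any minimum-width tree decomposition $(T,\mathcal{X})$ of $L(G)$ and to modify it, without increasing its width, so that the required property holds. The definition of $\bb$ comes from a standard Helly-type fact: for each vertex $v \in V(G)$, the edges of $G$ incident to $v$ form a clique in $L(G)$, and every clique in a graph admitting a tree decomposition lies inside a single bag (the subtrees $S_{e}$ for $e$ ranging over the clique pairwise intersect by the edge axiom, and pairwise-intersecting subtrees of a tree share a common node). I would therefore define $\bb(v)$ to be any node of $T$ whose bag contains every edge of $G$ incident to $v$ (the choice is arbitrary and immaterial for isolated vertices).

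With $\bb$ in hand, for each edge $vw \in E(G)$ both $\bb(v)$ and $\bb(w)$ lie in $S_{vw}$, so the subtree $S_{vw}$ already contains the path $P_{vw}$ in $T$ between $\bb(v)$ and $\bb(w)$; the content of the lemma is that $S_{vw}$ can be trimmed down to exactly $P_{vw}$. The second step is to do precisely that: form a new bag family $\mathcal{X}'$ by deleting the edge $vw$ from every bag indexed by a node off $P_{vw}$, and do this simultaneously for all edges of $G$.

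The third step is to verify that $(T,\mathcal{X}')$ is still a tree decomposition of $L(G)$, which reduces to three routine checks. First, each edge $vw$ survives in the bag at $\bb(v)$, so every vertex of $L(G)$ appears in some bag. Second, the set of bags of $\mathcal{X}'$ that still contain $vw$ is exactly $P_{vw}$, which is connected. Third, for any two edges $e_1=vu$ and $e_2=vw$ adjacent in $L(G)$, the common endpoint $v$ of $e_1$ and $e_2$ forces $\bb(v)$ to lie on both $P_{e_1}$ and $P_{e_2}$, and the bag at $\bb(v)$ contained both $e_1$ and $e_2$ by the very choice of $\bb(v)$, so neither is trimmed there. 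Since bags have only shrunk, the width of $(T,\mathcal{X}')$ is at most that of $(T,\mathcal{X})$, so minimality is preserved.

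The whole argument is short and I do not expect any serious obstacle; the only nontrivial ingredient is the clique-in-a-single-bag fact used to define $\bb$. The delicate point that must be checked carefully is the third tree-decomposition axiom after trimming, but as noted it reduces immediately to the defining property of $\bb(v)$ together with the observation that $\bb(v)$ lies on $P_{vw}$ for every neighbour $w$ of $v$.
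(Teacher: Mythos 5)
Your proposal is correct and captures the same key ideas as the paper's proof: use the Helly property to pick $\bb(v)$ inside a bag containing all edges incident to $v$, then discard each edge $vw$ from bags off the $\bb(v)$--$\bb(w)$ path, and note that any two edges adjacent in $L(G)$ still meet in the bag at the base node of their common endpoint. The only (cosmetic) difference is that the paper packages the trimming as an extremal argument---start from a minimum-width decomposition also minimising $\sum_{vw}|V(S_{vw})|$ and derive a contradiction---whereas you trim all edges simultaneously and verify the tree-decomposition axioms directly, which is equally valid.
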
 
\begin{proof}
Let $(T,\mathcal{X})$ be a minimum width tree decomposition of $L(G)$ such that $\sum_{vw \in E(G)} |V(S_{vw})|$ is minimised. For each vertex $v$ of $G$, the edges incident to $v$ form a clique in $L(G)$ and thus, by the Helly property, there exists a bag of $T$ containing all edges incident to $v$. Hence for each $v$ choose one such node and declare it $\bb(v)$.

Consider an edge $vw \in E(G)$. Denote the path between $\bb(v)$ and $\bb(w)$ by $P_{vw}$. Since $vw$ is in the bags at $\bb(v)$ and $\bb(w)$, it follows $P_{vw} \subseteq S_{vw}$. If $|V(P_{vw})| < |V(S_{vw})|$ then we could obtain another tree decomposition of $L(G)$ by removing $vw$ from the bags of $V(S_{vw})-V(P_{vw})$, since each edge incident to $vw$ appears in $\bb(v) \cup \bb(w)$. However, such a tree decomposition would contradict our choice of $(T,\mathcal{X})$. Hence $P_{vw} = S_{vw}$, as required.
\end{proof}


We call $\bb(v)$ the \emph{base node} of $v$. What Lemma~\ref{lemma:basenodes} shows is that, in some sense, the best way to construct a tree decomposition of $L(G)$ is to choose a tree $T$, assign a base node for each $v \in V(G)$, and then place each edge in exactly the bags between the base nodes assigned to its endpoints---any other tree decomposition ``contains" such a tree decomposition inside of it. 

We can obtain a slightly stronger result that will be useful when proving our major theorems. Given $(T,\mathcal{X})$ and $\bb$ as guaranteed by Lemma~\ref{lemma:basenodes}, we can also ensure that each base node is a leaf and that $\bb$ is a bijection between vertices of $G$ and leaves of $T$. This is done as follows. If $\bb(v)$ is not a leaf, then simply add a leaf adjacent to $\bb(v)$, and let $\bb(v)$ be this leaf instead. Such an operation does not change the width of the tree decomposition. If some leaf $x$ is the base node for several vertices of $G$, then add a leaf adjacent to $x$ for each vertex assigned to $x$. Finally, if $x$ is a leaf that is not a base node, then delete $x$; this maintains the desired properties since a leaf is never an internal node of a path.


We can improve this further. Given a tree $T$, we can root it at a node and orient all edges away from the root (that is, from the parent, to the child). In such a tree, a leaf is a node with outdegree 0. Say a rooted tree is \emph{binary} if every non-leaf node has outdegree 2. (That means that every non-leaf node has degree 3 except the root which has degree 2.) 

Given a tree decomposition, it is possible to root it and then modify the underlying tree so that each node has outdegree at most $2$, by (repeatedly) splitting a node with outdegree $3$ or more and distributing the children evenly amongst the two new nodes, where both new bags contain exactly the edges of the original bag. 
This maintains all the properties of the tree decomposition and does not increase the width. 
If $\bb$ is a mapping into the leaves, then this property is maintained by the splitting. In fact, in such a case, we can go further to obtain a binary tree; if $x$ is a non-root node with outdegree 1 then delete $x$ and an edge from its parent to its child, and if $x$ is a root with outdegree $1$ then delete $x$ and declare its child to be the new root. All of these results give the following key theorem.

\begin{theorem}
\label{theorem:goodtd}
For every graph $G$ there exists a minimum width tree decomposition $(T,\mathcal{X})$ of $L(G)$ together with an assignment $\bb:V(G) \rightarrow V(T)$ such that:
\begin{itemize*}
\item $T$ is a binary tree,
\item $\bb$ is a injection onto the leaves of $T$,
\item for each $vw \in E(G)$, $S_{vw}$ is exactly the path from $\bb(v)$ to $\bb(w)$.
\end{itemize*}
\end{theorem}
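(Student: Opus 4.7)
The plan is to start from the decomposition and assignment $(T,\mathcal{X},\bb)$ produced by Lemma~\ref{lemma:basenodes} and apply a sequence of modifications, each of which preserves the width (so minimality is maintained), keeps $\bb$ into the vertex set of the modified tree, and preserves the key property $S_{vw} = P_{\bb(v)\bb(w)}$ for every edge $vw \in E(G)$.

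First I would adjust $\bb$ so that every base node is a leaf. If $\bb(v)$ is an internal node, attach a fresh leaf $\ell_v$ adjacent to $\bb(v)$, copy the bag at $\bb(v)$ into $\ell_v$, and redefine $\bb(v) := \ell_v$. Since the bags at $\bb(v)$ and $\ell_v$ are identical, this remains a valid tree decomposition of the same width, and any path $S_{vw}$ that previously ended at $\bb(v)$ now extends by one edge to end at $\ell_v$; since the added bag contains $vw$, the path property still holds. To enforce injectivity, do the same trick once per vertex when several vertices share a base node, giving each its own private leaf. Finally, iteratively remove any leaf that is not a base node (removing a leaf cannot destroy any $S_{vw}$ because such leaves appear only as endpoints of a path, never internal vertices).

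Next I would make the tree binary. Root $T$ at an arbitrary node and orient edges away from the root. If a node $x$ has outdegree $d \ge 3$, replace $x$ by two nodes $x_1,x_2$ joined by an edge, attach the parent of $x$ to $x_1$, distribute the children of $x$ between $x_1$ and $x_2$, and put the bag of $x$ into both $x_1$ and $x_2$. This preserves the width and, since every subtree $S_{vw}$ that used $x$ now uses $x_1$, $x_2$, or the edge between them (all of whose bags contain $vw$), it also preserves the path property. Iterating drives every outdegree down to at most $2$. Then, to eliminate outdegree-$1$ nodes, suppress each such node (or, if it is the root, delete it and promote its child); this contracts an edge inside the tree and so can only shorten each $S_{vw}$, but because the two endpoint bags of the contracted edge contain identical sets of edges of $G$ whenever one of the endpoints has outdegree $1$ (after the suppression step is argued carefully), the subtree remains equal to the path between the (unchanged) base nodes.

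The main obstacle I would expect is checking the path property rigorously through the splitting and suppression steps: in both operations, one has to argue that the bags assigned to the new/retained nodes keep $S_{vw}$ equal to the unique path between $\bb(v)$ and $\bb(w)$ in the modified tree, which relies on the fact that $\bb$ maps only to leaves and that every modification happens at internal nodes or at non-base leaves. Once this bookkeeping is verified at each step, combining them yields a tree decomposition with all three properties, and since every step preserves width, $(T,\mathcal{X})$ remains of minimum width.
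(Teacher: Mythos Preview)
Your proposal is correct and follows essentially the same route as the paper: start from Lemma~\ref{lemma:basenodes}, push base nodes out to private leaves, prune non-base leaves, then root, split high-outdegree nodes (duplicating the bag), and suppress outdegree-$1$ nodes. The paper is in fact terser than you are about preserving the path property through the splitting and suppression steps; your flagging of that bookkeeping as the main thing to verify is apt, and your justification (that $\bb$ hits only leaves, so all surgery happens at internal or non-base nodes) is the right reason it goes through.
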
 

Theorem~\ref{theorem:goodtd} has all the properties we require in order to prove our main results. It also leads to the following lower bound on $\tw(L(G))$ that is slightly stronger than \eqref{eq:easylb}.

\begin{proposition}
$\tw(L(G)) \geq \tw(G)-1.$
\end{proposition}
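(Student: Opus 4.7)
The plan is to extract from the tree decomposition $(T,\mathcal{X},\bb)$ of $L(G)$ given by Theorem~\ref{theorem:goodtd} an explicit tree decomposition of $G$ of width at most $\tw(L(G))$, with the help of an orientation of $E(G)$. Under a mild hypothesis on $G$ this actually yields $\tw(G)\leq\tw(L(G))$; the ``$-1$'' in the statement comes from a short case analysis for the remaining graphs.

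Assume first that every connected component of $G$ contains a cycle. Then we may orient the edges of $G$ so that every vertex has in-degree at least $1$: orient each cycle cyclically, and direct any dangling trees toward the cycles. For each $v\in V(G)$ let $V_v\subseteq E(G)=V(L(G))$ denote the edges directed into $v$; the sets $V_v$ partition $E(G)$ into non-empty parts, each contained in the clique of $L(G)$ formed by the edges incident to $v$. Set $Y_t=\{v\in V(G):V_v\cap X_t\neq\emptyset\}$ for each $t\in V(T)$, keeping $T$ as the underlying tree.

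I would then verify that $(T,\{Y_t\})$ is a tree decomposition of $G$ of width at most $\tw(L(G))$. Vertex coverage holds because $V_v\subseteq X_{\bb(v)}$ and $V_v\neq\emptyset$, so $v\in Y_{\bb(v)}$. The set $\{t:v\in Y_t\}=\bigcup_{e\in V_v}S_e$ is a union of paths through $\bb(v)$, hence a subtree. For each $vw\in E(G)$ with $vw\in V_v$ we have $vw\in V_v\cap X_{\bb(w)}$, so $v\in Y_{\bb(w)}$ alongside $w$. Since each $e\in X_t$ lies in exactly one $V_v$ and thereby contributes at most one vertex to $Y_t$, we conclude $|Y_t|\leq|X_t|\leq\tw(L(G))+1$, so $\tw(G)\leq\tw(L(G))$.

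It remains to remove the cycle hypothesis. The treewidths of both $G$ and $L(G)$ are maxima over the connected components of $G$, so it suffices to handle the case of a connected acyclic $G$, i.e.\ a tree. If $\Delta(G)\geq 2$ then $L(G)$ contains the clique $K_{\Delta(G)}$ on the edges incident to a maximum-degree vertex, so $\tw(L(G))\geq\Delta(G)-1\geq 1=\tw(G)$; the remaining cases $G=K_2$ and $G=K_1$ give $\tw(L(G))\geq\tw(G)-1$ directly. The main technical step is constructing the in-degree-$\geq 1$ orientation, and its unavailability for trees is precisely what forces the loss of $1$ in the proposition.
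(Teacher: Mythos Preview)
Your argument is correct and takes a genuinely different route from the paper. Both proofs start from the decomposition $(T,\mathcal{X},\bb)$ of $L(G)$ and replace each edge $vw$ in a bag by a single endpoint. The paper makes this choice arbitrarily, which need not yield a tree decomposition of $G$ (the edge condition can fail); it then repairs the defect by subdividing edges of $T$ and inserting one extra vertex per bag, incurring the global ``$+1$'' in bag size. You instead make the choice via an orientation with minimum in-degree at least $1$, so that the head map $e\mapsto v$ automatically gives a valid tree decomposition with $|Y_t|\le|X_t|$, yielding the stronger bound $\tw(G)\le\tw(L(G))$ whenever every component of $G$ contains a cycle; the acyclic components are then handled by the elementary clique bound. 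The paper's proof is uniform across all $G$ with no case split, while yours isolates exactly where the ``$-1$'' is needed (namely $K_2$ and $K_1$) and gives a sharper inequality elsewhere.

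One small slip: ``direct any dangling trees toward the cycles'' produces in-degree $0$ at the leaves; you want to direct those edges \emph{away} from the cycle (equivalently, pick a spanning unicyclic subgraph of each component, orient its unique cycle cyclically and the remaining tree edges away from the cycle, then orient any leftover edges arbitrarily). With that correction the construction goes through.
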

\begin{proof}
Let $k = \tw(L(G))+1$, and let $(T,\mathcal{X})$ be a tree decomposition of $L(G)$ of width $k-1$, together with an assignment $\bb$ as ensured by Lemma~\ref{lemma:basenodes}. Partially construct a tree decomposition of $G$ as follows: for each edge $vw \in E(G)$, arbitrarily choose one endpoint (say $v$) and place $v$ in all bags of $S_{vw}$ except $\bb(w)$, in which we place $w$. The size of a bag is at most $k$ since each edge contributes only one endpoint to a given bag. This is a tree decomposition of $G$, except if $vw \in E(G)$ then it is possible that $v$ and $w$ do not share a bag, but do appear in adjacent bags. For each such edge $vw \in E(G)$, call the edge $XY \in E(T)$ with $v \in X-Y$ and $w \in Y-X$ the edge \emph{corresponding} to $vw$. If $XY$ is the edge corresponding to both $vw,uz \in E(G)$, then subdivide it to create a new bag $X'=(X - \{v\}) \cup \{w\}$. Now $XX'$ corresponds to $vw$, and nothing else, and $X'Y$ corresponds to $uz$. Repeat this process so that every edge in $T$ corresponds to at most one edge of $G$. Finally, arbitrarily root $T$, and if $XY$ is the edge corresponding to $vw$ such that $Y$ is the child of $X$, then add $v$ to $Y$. Note that this increases the size of each bag by at most 1, and creates a tree decomposition for $G$. Thus $\tw(G) \leq k = \tw(L(G))+1$, as required.
\end{proof}

Theorem~\ref{theorem:goodtd} also shows a connection between $\tw(L(G))$ and embeddings of $G$ into a tree.
Consider the following definition by \citet{Bienstock}. Define an \emph{embedding} as an injective map from $V(G)$ into the leaves of a sub-cubic tree $T$. If $\pi$ is such an embedding and $vw \in E(G)$ then let $P_{vw}$ be the path from $\pi(v)$ to $\pi(w)$. The vertex congestion of $\pi$ is $$\max_{u \in V(T)}|\{vw \in E(G) : u \in V(P_{vw})\}|.$$ The \emph{vertex congestion of $G$}, denoted $\con(G)$, is the minimum congestion over all sub-cubic trees $T$ and choices of $\pi$. (\citet{Bienstock} also considered the \emph{edge congestion} of $G$ which counts the maximum number of paths $P_{vw}$ using an edge $e \in E(T)$. \citeauthor{Bienstock} showed that vertex and edge congestion are within a factor of $\frac{3}{2}$ of each other.) Graph embeddings into paths (which we discuss below) and infinite grids (for example \citep{Bhatt}) were studied prior to \citeauthor{Bienstock}. Embeddings have also been considered for hypercubes, see \citep{hypercube} for example. Determining $\con(G)$ is NP-hard \citep{Saks}. 

Observe that embeddings into sub-cubic trees are similar to our construction of tree decompositions in Theorem~\ref{theorem:goodtd}, and lead to the following theorem.

%
\begin{theorem}
\label{theorem:cng}
For every graph $G$, $$\con(G) = \tw(L(G))+1.$$ 
\end{theorem}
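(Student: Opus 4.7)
The plan is to prove both inequalities $\con(G) \le \tw(L(G))+1$ and $\con(G) \ge \tw(L(G))+1$ using the structural tree decomposition provided by Theorem~\ref{theorem:goodtd} in one direction and a direct ``path-labelling'' construction in the other.

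First, for the inequality $\con(G) \le \tw(L(G))+1$, I would invoke Theorem~\ref{theorem:goodtd} to obtain a minimum-width tree decomposition $(T,\mathcal{X})$ of $L(G)$ with an injection $\bb$ from $V(G)$ onto the leaves of a binary tree $T$, such that for every $vw\in E(G)$ the subtree $S_{vw}$ is exactly the path in $T$ from $\bb(v)$ to $\bb(w)$. Since a binary tree is sub-cubic, $\bb$ itself is a valid embedding of $G$ into a sub-cubic tree in the sense of \citet{Bienstock}, and the ``path of $vw$'' associated with the embedding is precisely $S_{vw}$. For any node $u\in V(T)$, the set $\{vw\in E(G):u\in V(S_{vw})\}$ is exactly the bag at $u$, so the congestion of $\bb$ at $u$ equals the size of the bag at $u$. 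Taking the maximum over $u$ gives a congestion of at most $\tw(L(G))+1$, proving $\con(G) \le \tw(L(G))+1$.

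For the reverse inequality $\tw(L(G))+1 \le \con(G)$, I would start from an optimal embedding $\pi$ of $G$ into a sub-cubic tree $T$, and define a tree decomposition of $L(G)$ with underlying tree $T$ by placing each edge $vw\in E(G)$ (viewed as a vertex of $L(G)$) into exactly the bags indexed by the nodes of $P_{vw}$, the path in $T$ from $\pi(v)$ to $\pi(w)$. The three tree-decomposition axioms are then checked directly: every $vw$ appears in at least one bag because $P_{vw}$ is nonempty; the bags containing $vw$ form the connected subtree $P_{vw}$; and for any two adjacent vertices of $L(G)$, i.e.\ two edges $vw,vx\in E(G)$ sharing the endpoint $v$ in $G$, both $P_{vw}$ and $P_{vx}$ contain $\pi(v)$, so the two edges cohabit the bag at $\pi(v)$. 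By construction, the bag at any $u\in V(T)$ has size equal to the congestion of $\pi$ at $u$, so the width of the decomposition is at most $\con(G)-1$, giving $\tw(L(G))\le \con(G)-1$.

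The two inequalities combine to give $\con(G)=\tw(L(G))+1$. I do not anticipate a serious obstacle: the key work has already been done in Theorem~\ref{theorem:goodtd}, which matches the sub-cubic/leaf-injection setup of the congestion definition exactly, so both directions reduce to reading off the correspondence between ``size of bag at $u$'' and ``number of paths through $u$''. The one mild subtlety is verifying the clique-covering axiom for the reverse direction, which follows from the observation that any two edges of $G$ sharing an endpoint $v$ both pass through $\pi(v)$ in $T$.
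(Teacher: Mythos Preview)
Your proposal is correct and follows essentially the same approach as the paper's proof: one direction uses Theorem~\ref{theorem:goodtd} (a binary tree is sub-cubic, so $\bb$ is a valid embedding whose congestion at each node equals the bag size there), and the other direction turns an optimal embedding into a tree decomposition via the path-labelling you describe. The paper's version is terser---it compresses the axiom-checking for the $\tw(L(G))+1\le\con(G)$ direction into a single sentence---but the ideas are identical.
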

\begin{proof}
An embedding into the leaves of a sub-cubic tree is equivalent to an assignment of base nodes into the leaves. An edge $vw$ contributes to the congestion at a vertex $u$ of $T$ under an embedding $\pi$ if and only if $vw$ is in the bag of $u$ when $\pi$ is treated as an assignment. Thus $\tw(L(G))+1 \leq \con(G)$. Equality holds by Theorem~\ref{theorem:goodtd} since every binary tree is sub-cubic.
\end{proof}


A similar result to Theorem~\ref{theorem:goodtd} holds for path decompositions. Much like our results on trees, it is reasonably clear that we can always ensure that $\bb$ is a bijection between vertices of $G$ and nodes of $P$. This gives the following lemma.

\begin{lemma}
\label{lemma:goodpd}
For every line graph $L(G)$ there exists a minimum width path decomposition $(P,\mathcal{X})$ together with an assignment $\bb:V(G) \rightarrow V(P)$ such that:
\begin{itemize*}
\item $P$ is a $|V(G)|$-node path,
\item $\bb$ is a $1-1$ mapping onto $P$,
\item for each $vw \in E(G)$, $S_{vw}$ is exactly the path from $\bb(v)$ to $\bb(w)$.
\end{itemize*}
\end{lemma}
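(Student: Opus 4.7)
The plan is to mimic the proof of Theorem~\ref{theorem:goodtd} in the setting of paths. Since a path is a tree, Lemma~\ref{lemma:basenodes} applies directly and yields a minimum width path decomposition $(P,\mathcal{X})$ of $L(G)$ together with an assignment $\bb:V(G) \to V(P)$ such that, for each $vw \in E(G)$, $S_{vw}$ is exactly the subpath of $P$ from $\bb(v)$ to $\bb(w)$. What remains is to modify $(P,\mathcal{X})$ and $\bb$ so that $\bb$ becomes a bijection onto the nodes of $P$ and $|V(P)|=|V(G)|$.

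First I would ensure that $\bb$ is injective. Whenever two distinct vertices $u$ and $v$ share a base node $x$, insert a new node $x'$ into $P$ adjacent to $x$ (by appending $x'$ as a new endpoint if $x$ is already an endpoint of $P$, or otherwise by subdividing an edge $xy$ of $P$ so that $x'$ lies between $x$ and $y$); reassign $\bb(u):=x'$; add to the bag at $x'$ every edge whose updated $S$-subpath runs through $x'$; and remove from the bag at $x$ any edge incident to $u$ whose updated $S$-subpath no longer passes through $x$. A short accounting in the subdivision case shows that the new bag at $x'$ is the union of the edges in the old bag at $x$ that are also in the old bag at $y$, together with the edges $uw$ whose other base $\bb(w)$ lies on $x$'s side of $P$. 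The latter set is disjoint from the former and lies inside the old bag at $x$, so the bag placed at $x'$ has size at most the size of the old bag at $x$, and the width is preserved. Iterating resolves every collision and delivers an injective $\bb$.

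Next I would prune every node of $P$ that is not in the image of $\bb$. A leaf of $P$ that is not a base node must have empty bag, since any edge in that bag would force the leaf to lie strictly inside some $S_{vw}$, contradicting its being a leaf but not equal to $\bb(v)$ or $\bb(w)$. An interior non-base node $x$ with neighbours $y$ and $z$ has the property that every edge in its bag also lies in the bag at $y$ and in the bag at $z$, because each $S_{vw}$ through $x$ must pass through both neighbours of $x$; hence deleting $x$ and joining $y$ directly to $z$ yields a valid path decomposition of the same width that retains the base-node property for every edge of $G$. Repeatedly applying these two pruning operations leaves $P$ with exactly $|V(G)|$ nodes, and $\bb$ is then a bijection onto $V(P)$, as required.

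I expect the main obstacle to be the width accounting in the injectivity step, specifically verifying that the bag newly installed at the subdivision point $x'$ does not exceed the original width. This is handled by partitioning the old bag at $x$ into its intersection with the bag at $y$ (which supplies the edges whose $S$-subpaths are now forced to cross $x'$) and its complement in the bag at $x$ (which already contains precisely the edges incident to $u$ whose far endpoint lies on $x$'s side of $P$); together these two disjoint pieces form the new bag at $x'$ and fit within the budget.
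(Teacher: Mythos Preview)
Your proposal is correct and follows the same approach the paper sketches: start from the path-decomposition analogue of Lemma~\ref{lemma:basenodes} (whose proof never alters the underlying tree, so it applies verbatim when one begins with a minimum width \emph{path} decomposition), then make $\bb$ injective by subdividing, and finally contract away non-base nodes. The paper offers no detail beyond ``it is reasonably clear,'' and your width accounting at the subdivision node $x'$---showing the new bag is contained in the old bag at $x$---is exactly how that gap is filled.
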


From Lemma~\ref{lemma:goodpd} and by a similar argument to Theorem~\ref{theorem:cng}, the following holds.

\begin{theorem}
\label{theorem:vcngpath}
For every graph $G$, let $P$ be a $|V(G)|$-vertex path and $\Pi$ be the set of all bijections $\pi:V(G) \rightarrow P$. Then
$$\min_{\pi \in \Pi}|\max_{u \in V(P)} |\{vw \in E(G):u \in V(P_{vw})\}|| = \pw(L(G))+1,$$
where $P_{vw}$ is the path from $\pi(v)$ to $\pi(w)$.
\end{theorem}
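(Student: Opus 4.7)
The plan is to proceed in direct analogy with the proof of Theorem~\ref{theorem:cng}, using Lemma~\ref{lemma:goodpd} in place of Theorem~\ref{theorem:goodtd}. The key observation is that a bijection $\pi : V(G) \to V(P)$ carries exactly the same information as an assignment of base nodes $\bb : V(G) \to V(P)$ that is a bijection, and under this identification the path $P_{vw}$ from $\pi(v)$ to $\pi(w)$ in $P$ is precisely the subpath $S_{vw}$ of the induced decomposition. Once this dictionary is in hand, the result reduces to checking that ``congestion at $u$'' and ``size of the bag at $u$'' are the same quantity.

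For the inequality $\min_\pi \max_u |\{vw : u \in V(P_{vw})\}| \leq \pw(L(G))+1$, I would start with a minimum width path decomposition of $L(G)$ and apply Lemma~\ref{lemma:goodpd} to obtain $(P,\mathcal{X})$ together with a bijection $\bb : V(G) \to V(P)$ for which each $S_{vw}$ equals the path from $\bb(v)$ to $\bb(w)$. Taking $\pi := \bb$, the congestion at any $u \in V(P)$ is $|\{vw \in E(G) : u \in V(P_{vw})\}| = |\{vw \in E(G) : u \in V(S_{vw})\}|$, which is exactly the number of edges lying in the bag at $u$, and hence at most $\pw(L(G))+1$.

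For the reverse inequality, I would take any bijection $\pi$ achieving the minimum congestion $c$, and define a candidate path decomposition $(P,\mathcal{X})$ of $L(G)$ by placing $vw \in E(G)$ in the bag at $u$ iff $u \in V(P_{vw})$. The three axioms are immediate: every edge $vw$ lies in the bag at $\pi(v)$ (and at $\pi(w)$); the bags containing $vw$ form the connected subpath $V(P_{vw})$; and two incident edges $vw, vx$ both contain $\pi(v)$ in their respective paths, so they share a bag. The maximum bag size is $c$, so the width of this decomposition is $c-1$, which gives $\pw(L(G)) \leq c - 1$, i.e.\ $c \geq \pw(L(G))+1$.

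There is no real obstacle here; the content of the theorem is entirely encapsulated in Lemma~\ref{lemma:goodpd}, and the argument is a routine translation between bijections-with-congestion and path-decompositions-with-base-nodes, mirroring the tree case of Theorem~\ref{theorem:cng}. The only minor care needed is to state clearly that on a path every subpath coincides with the unique path between its endpoints, so no sub-cubic or leaf-assignment technicality of the tree version is required.
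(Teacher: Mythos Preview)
Your proposal is correct and follows essentially the same approach as the paper, which simply states that the result follows from Lemma~\ref{lemma:goodpd} by an argument analogous to Theorem~\ref{theorem:cng}. Your write-up makes that analogy explicit and verifies both inequalities in the expected way.
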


Theorem~\ref{theorem:vcngpath} considers the minimum \emph{vertex} congestion of a graph embedding into a path. As mentioned previously, we may also consider the minimum \emph{edge} congestion of a graph embedding into a path. This topic is well studied \citep{cut1,cut2,cut3,cut4}; it is usually referred to as cutwidth. Specifically, if $\pi$ is a linear ordering of $G$ (that is, a bijection from $V(G)$ to $\{1,\dots,|V(G)|\}$), then the \emph{cutwidth} of $\pi$ is defined as $$\max_{1\leq i \leq |V(G)|} |\{vw \in E(G) : \pi(v) \leq i, \pi(w) > i\}|,$$ and the cutwidth of $G$, denoted $\cw(G)$, is the minimum cutwidth over all choices of $\pi$. Determining the cutwidth of a graph is NP-complete \citep{fanica}. Previously \citet{Golovach} proved that if $\Delta(G) \geq 2$ then $$\pw(L(G)) - \floor{\tfrac{\Delta(G)}{2}} + 1 \leq \cw(G) \leq \pw(L(G)).$$
(Note the result of Golovach concerns \emph{vertex separation number}, but this is equal to  pathwidth \citep{kinn}.) This result is the edge congestion equivalent to Theorem~\ref{theorem:vcngpath}. The lower bound here is sharp due to the star \citep{Golovach}. Thus there is a relationship between $\pw(L(G))$ and both the minimum vertex and minimum edge congestion of an embedding of $G$ into a path.

\section{Lower Bound in Terms of Average Degree}
\label{section:avgdeg}

This section proves Theorem~\ref{theorem:avgdegintro}. Say a graph $G$ is \emph{minimal} if $\dd(G-S) < \dd(G)$ for all non-empty $S \subsetneq V(G)$. For example, every connected regular graph is minimal. Given a set $X \subseteq V(G)$, let $\ee(X)$ denote the set of edges with both endpoints in $X$. Given $X,Y \subseteq V(G)$ such that $X \cap Y = \emptyset$, let $\ee(X,Y)$ denote the set of edges with one endpoint in each of $X$ and $Y$.

\begin{lemma}
\label{lemma:goodSfact}
If $G$ is a minimal graph and $S$ is a non-empty proper subset of $V(G)$, then
$$\frac{1}{2}\dd(G) < \frac{1}{|S|}\left(\left(\sum_{v \in S} \deg(v)\right) - |\ee(S)|\right).$$ 
\end{lemma}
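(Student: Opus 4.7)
The plan is to apply the minimality hypothesis to the induced subgraph $G-S$ and translate the resulting average-degree inequality into the degree/edge-count form required.

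First I would introduce the complement $T := V(G) - S$, which is nonempty since $S$ is a proper subset, so that $G-S = G[T]$ is defined and nonempty. Then I would rewrite the key quantity using the standard handshake identity restricted to $S$: since every edge with both endpoints in $S$ contributes $2$ to $\sum_{v\in S}\deg(v)$ and every edge in $\ee(S,T)$ contributes $1$, we have
\[
\sum_{v\in S}\deg(v) \;=\; 2|\ee(S)| + |\ee(S,T)|,
\]
and hence
\[
\Bigl(\sum_{v\in S}\deg(v)\Bigr) - |\ee(S)| \;=\; |\ee(S)| + |\ee(S,T)|,
\]
which counts exactly the edges of $G$ incident with at least one vertex of $S$. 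Subtracting this from $|E(G)|$ leaves precisely the edges with both endpoints in $T$, i.e. $|E(G-S)|$.

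Next I would invoke the minimality of $G$ applied to the nonempty proper subset $S$: $\dd(G-S) < \dd(G)$, which in terms of edge counts reads
\[
\frac{2|E(G-S)|}{|V(G)| - |S|} \;<\; \dd(G) \;=\; \frac{2|E(G)|}{|V(G)|},
\]
so $|E(G-S)| < \tfrac{1}{2}\dd(G)\,(|V(G)| - |S|)$. Substituting $|E(G-S)| = |E(G)| - \bigl(\sum_{v\in S}\deg(v) - |\ee(S)|\bigr)$ and using $|E(G)| = \tfrac{1}{2}\dd(G)|V(G)|$, the two $\tfrac{1}{2}\dd(G)|V(G)|$ terms cancel and we are left with
\[
\Bigl(\sum_{v\in S}\deg(v)\Bigr) - |\ee(S)| \;>\; \tfrac{1}{2}\dd(G)\,|S|,
\]
which after division by $|S|$ is exactly the desired inequality.

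There is no serious obstacle: the argument is essentially a double-counting identity glued to the definition of minimality. The only point to be slightly careful about is confirming that the hypotheses on $S$ (nonempty, proper) are precisely what is needed to legally apply the minimality condition and to ensure $|V(G)|-|S| > 0$ in the denominator when invoking $\dd(G-S)$.
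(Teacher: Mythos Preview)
Your proof is correct and follows essentially the same approach as the paper: apply the minimality hypothesis to $G-S$, express $|E(G-S)|$ as $|E(G)|$ minus the edges incident with $S$ (which equals $|\ee(S)|+|\ee(S,T)| = \sum_{v\in S}\deg(v)-|\ee(S)|$), and rearrange. The only cosmetic difference is that you isolate the handshake identity explicitly before substituting, whereas the paper writes the edge count of $G-S$ directly and manipulates the resulting inequality in one line.
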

\begin{proof}
Let $G' := G-S$, and note that $\dd(G') < \dd(G)$. Let $m:=|E(G)|$ and $n:=|V(G)|$. So,
$$\frac{2m}{n} = \dd(G) > \dd(G') = \frac{2(m - |\ee(S,V(G)-S)| - |\ee(S)|)}{n-|S|}.$$
Hence, $(m - |\ee(S,V(G)-S)| - |\ee(S)|)n < m(n-|S|)$ and $- |\ee(S,V(G)-S)|n - |\ee(S)|n < -m|S|$. Thus \begin{equation*}\frac{1}{2}\dd(G) = \frac{m}{n} < \frac{1}{|S|}\left(|\ee(S,V(G)-S)| + |\ee(S)|\right) = \frac{1}{|S|}\left(\left(\sum_{v \in S} \deg(v)\right) - |\ee(S)|\right). \qedhere\end{equation*}
\end{proof}

Theorem~\ref{theorem:avgdegintro} follows from the following lemma since every graph $G$ contains a minimal subgraph $H$ with $\dd(H) \geq \dd(G)$, in which case $L(H) \subseteq L(G)$ and $\tw(L(G)) \geq \tw(L(H))$.

\begin{lemma}
\label{lemma:avgdeg}
For every minimal graph $G$ with average degree $\dd(G)$, $$\tw(L(G)) > \frac{1}{8}\dd(G)^2 + \frac{3}{4}\dd(G) - 2.$$
\end{lemma}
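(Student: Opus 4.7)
The plan is to use Theorem~\ref{theorem:goodtd} to fix a minimum-width tree decomposition of $L(G)$ realised by a binary tree $T$ together with a bijection $\bb$ from $V(G)$ onto the leaves of $T$. For each node $u$, let $f(u)$ denote the number of edges $vw \in E(G)$ whose associated path $S_{vw}$ contains $u$; equivalently, $f(u)$ is the size of the bag at $u$, so $\max_u f(u) = \tw(L(G))+1$. It therefore suffices to exhibit a single node $u^*$ with $f(u^*) > \tfrac{1}{8}\dd(G)^2 + \tfrac{3}{4}\dd(G) - 1$.

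To locate $u^*$, I would root $T$ arbitrarily, write $A(u) \subseteq V(G)$ for the vertices whose base nodes lie in the subtree rooted at $u$, and set $\tau := \lfloor \dd(G)/2 \rfloor + 1$. Starting at the root, I would descend greedily: at each step, if some child $u'$ of the current node has $|A(u')| \geq \tau$, move to such a child; otherwise halt. The invariant $|A(\text{current})| \geq \tau$ is preserved, so the halting node $u^*$ satisfies $|A(u^*)| \geq \tau$ while both children $u_1, u_2$ satisfy $|A(u_i)| \leq \tau - 1$. Since $n \geq \dd(G) + 1 > 2(\tau - 1)$, the halt cannot occur at the root, so writing $A := A(u^*)$ and $A_i := A(u_i)$ we have $A = A_1 \sqcup A_2$ and $V(G) \setminus A \neq \emptyset$.

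The three subtrees of $u^*$ have leaf sets $V(G) \setminus A$, $A_1$, and $A_2$, so $f(u^*) = |\ee(A, V(G) \setminus A)| + |\ee(A_1, A_2)|$. Applying Lemma~\ref{lemma:goodSfact} to the proper non-empty subset $A$ gives $|\ee(A, V(G)\setminus A)| > \tfrac{1}{2}\dd(G)|A| - |\ee(A)|$, and substituting the partition identity $|\ee(A)| = |\ee(A_1)| + |\ee(A_2)| + |\ee(A_1, A_2)|$ causes the $|\ee(A_1, A_2)|$ terms to cancel, yielding the clean inequality
\[
f(u^*) > \tfrac{1}{2}\dd(G)|A| - |\ee(A_1)| - |\ee(A_2)|.
\]

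To finish, I would use the trivial upper bound $|\ee(A_i)| \leq \binom{|A_i|}{2}$ and minimise the right-hand side over partitions with $|A_1| + |A_2| = |A| \in [\tau, 2\tau - 2]$ and $|A_i| \leq \tau - 1$; a short monotonicity check shows the minimum is attained at $|A| = \tau$ with the most unbalanced partition $\{|A_1|, |A_2|\} = \{1, \tau - 1\}$. Substituting $\tau = \lfloor \dd(G)/2 \rfloor + 1$ and handling the parity of $\dd(G)$ separately gives $f(u^*) > \tfrac{1}{8}\dd(G)^2 + \tfrac{3}{4}\dd(G) - \tfrac{3}{8}$ (strictly stronger than needed), and the lemma follows. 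The main obstacle is calibrating the threshold $\tau$: it must be large enough that the linear gain $\tfrac{1}{2}\dd(G)|A|$ genuinely dominates the quadratic loss $\binom{|A_i|}{2}$ from a possibly clique-induced $A_i$, yet small enough that the greedy descent terminates at some internal node with $|A(u^*)|$ only a little bigger than $\tau$; balancing these forces $\tau \approx \dd(G)/2$ and extracts exactly the $\tfrac{1}{8}\dd(G)^2$ coefficient appearing in the statement.
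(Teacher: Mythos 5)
Your argument follows essentially the same route as the paper's: fix a decomposition via Theorem~\ref{theorem:goodtd}, descend greedily from the root to a ``significant'' internal node whose two child-subtrees each hold few base nodes while together they hold many, identify the bag at that node as $|\ee(A,V(G)\setminus A)| + |\ee(A_1,A_2)|$, use Lemma~\ref{lemma:goodSfact} plus the partition identity $|\ee(A)| = |\ee(A_1)|+|\ee(A_2)|+|\ee(A_1,A_2)|$ to cancel the cross term, and then minimize $\tfrac{1}{2}\dd(G)(|A_1|+|A_2|)-\binom{|A_1|}{2}-\binom{|A_2|}{2}$ over the feasible region. The only real stylistic difference is that the paper uses the real threshold $\tfrac{1}{2}\dd(G)$ and a reparametrization $|A_i|=\alpha_i\dd(G)$, $s=1/\dd(G)$, deferring a two-variable optimization to Appendix~A, while you set an integer threshold $\tau=\lfloor\dd(G)/2\rfloor+1$ and optimize directly over integer $(|A_1|,|A_2|)$. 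Since the $|A_i|$ are integers these are equivalent formulations.

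Two details need repair. First, you need $\dd(G)\geq 2$ (equivalently $\tau\geq 2$) both so that the greedy descent never steps onto a leaf (a leaf has $|A(u')|=1<\tau$) and so that the corner $(1,\tau-1)$ you optimize at satisfies $\tau-1\geq 1$; the case $\dd(G)<2$ is trivial but must be dispatched separately, as the paper does at the start of its proof. Second, the claimed intermediate bound $f(u^*)>\tfrac{1}{8}\dd(G)^2+\tfrac{3}{4}\dd(G)-\tfrac{3}{8}$ is valid only when $\dd(G)$ is an integer. In general $\dd(G)=2|E(G)|/|V(G)|$ is merely rational; writing $r:=\dd(G)/2-\lfloor\dd(G)/2\rfloor\in[0,1)$, the value of your objective at $(1,\tau-1)$ is exactly $\tfrac{1}{8}\dd(G)^2+\tfrac{3}{4}\dd(G)-\tfrac{1}{2}(r^2+r)$, and $\tfrac{1}{2}(r^2+r)$ can approach $1$, not just $\tfrac{3}{8}$. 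This still yields $f(u^*)>\tfrac{1}{8}\dd(G)^2+\tfrac{3}{4}\dd(G)-1$, which is exactly what the lemma requires, so the conclusion survives; only the claimed constant is too optimistic. ``Handling the parity of $\dd(G)$'' presupposes integrality that does not hold in general.
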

\begin{proof}
If $\dd(G)=0$, then the lemma holds trivially. If $0<\dd(G)<2$, then $\tw(L(G)) \geq 0 = \frac{1}{2} + \frac{3}{2} -2 = \frac{1}{8}2^2 + \frac{3}{4}2 - 2 > \frac{1}{8}\dd(G)^2 + \frac{3}{4}\dd(G) - 2$, as required. Now assume that $\dd(G) \geq 2$. 

Let $(T,\mathcal{X})$ be a tree decomposition for $L(G)$ as guaranteed by Theorem~\ref{theorem:goodtd}.
For each node $u$ of $T$, let $T_{u}$ denote the subtree of $T$ rooted at $u$ containing exactly $u$ and the descendants of $u$. Let $\zz(T_u)$ be the set of vertices of $G$ with base nodes in $T_u$. (Recall all base nodes are leaves.)
Call a node $u$ of $T$ \emph{significant} if $|\zz(T_u)| > \frac{1}{2}\dd(G)$ but $|\zz(T_v)| \leq \frac{1}{2}\dd(G)$ for each child $v$ of $u$. 

\begin{claim}
\label{claim:avgsig}
There exists a non-root, non-leaf significant node $u$.
\end{claim}
\begin{proof}
Starting at the root of $T$, begin traversing down the tree by the following rule: if some child $v$ of the current node has $|\zz(T_v)| > \frac{1}{2}\dd(G)$, then traverse to $v$, otherwise halt. Clearly this algorithm halts. 

For a leaf $v$, $|\zz(T_v)| = 1$. We only traverse to $v$ if $|\zz(T_v)| > \frac{1}{2}\dd(G) \geq \frac{1}{2}2=1$. Hence the algorithm halts at a non-leaf. 

Say the algorithm halts at the root. If $v,w$ are children of the root then $|\zz(T_{v})|,|\zz(T_{w})| \leq \frac{1}{2}\dd(G)$. Thus $|\zz(T_u)| = |\zz(T_{v})|+|\zz(T_{w})| \leq \dd(G) < |V(G)|$. But every base node is in $\zz(T_u)$. Hence the algorithm does not halt at the root.

Let $u$ be the node where the algorithm halts. It is not the root or a leaf. First, $|\zz(T_u)| > \frac{1}{2}\dd(G)$ given that we traversed to $u$. Second, if $v$ is a child of $u$, then $|\zz(T_v)| \leq \frac{1}{2}\dd(G)$. This shows that $u$ is a significant, as required.
\end{proof}

If $a,b$ are the children of $u$, let $A:=\zz(T_{a})$ and $B:=\zz(T_{b})$. Hence $|A|,|B| \leq \frac{1}{2}\dd(G)$ but $|A \cup B| > \frac{1}{2}\dd(G)$. Also $A \cap B = \emptyset$. Define
$$g(A,B) := \left(\sum_{v \in A} \deg(v)\right) + \left(\sum_{v \in B} \deg(v)\right) - |\ee(A)| - |\ee(B)| - |\ee(A,B)|.$$
\begin{claim}
$g(A,B) > \frac{1}{2}(|A|+|B|)\dd(G)$.
\end{claim}
\begin{proof}
Given that $|A \cup B| > \frac{1}{2}\dd(G) \geq \frac{1}{2}2$, it follows that $A \cup B \neq \emptyset$. Also, since $u$ is not the root and $\zz(T_u) = A \cup B$, it follows that $A \cup B \subsetneq V(G)$. Hence we may apply Lemma~\ref{lemma:goodSfact} to $A \cup B$. 
Hence
$$\frac{1}{2}\dd(G) < \frac{1}{|A \cup B|}\left(\left(\sum_{v \in A \cup B} \deg(v)\right) - |\ee(A \cup B)|\right).$$ 
By substitution,
\begin{equation*}\frac{1}{2}(|A|+|B|)\dd(G) < \left(\sum_{v \in A} \deg(v)\right) + \left(\sum_{v \in B} \deg(v)\right) - |\ee(A)| - |\ee(B)| - |\ee(A,B)| = g(A,B). \qedhere\end{equation*}
\end{proof}

Let $X$ be the bag indexed by $u$. The bag $X$ consists of every edge with exactly one endpoint in $A$ and every edge with exactly one endpoint in $B$. Thus,
\begin{align} \label{eq:lbX}
|X| &= |\ee(A,V(G)-A)| + |\ee(B,V(G)-B)| - |\ee(A,B)| \nonumber\\
&= \left(\sum_{v \in A} \deg(v)\right) - 2|\ee(A)| + \left(\sum_{v \in B} \deg(v)\right) - 2|\ee(B)| - |\ee(A,B)| \nonumber\\
&= g(A,B) - |\ee(A)| - |\ee(B)| \nonumber\\
&\geq g(A,B) - \tfrac{1}{2}|A|(|A|-1) - \tfrac{1}{2}|B|(|B|-1) \nonumber\\
&> \tfrac{1}{2}(|A|+|B|)\dd(G) - \tfrac{1}{2}|A|(|A|-1) - \tfrac{1}{2}|B|(|B|-1).
\end{align}

Define $\alpha,\beta$ such that $|A|=\alpha\dd(G)$ and $|B|=\beta\dd(G)$, and define $s:=\frac{1}{\dd(G)}$. Recall $|A|,|B| \leq \frac{1}{2}\dd(G)$ and $|A|+|B| > \frac{1}{2}\dd(G)$. Hence $|A|,|B| > 0$ and so $|A|,|B| \geq 1$. Thus $s \leq \alpha,\beta \leq \frac{1}{2}$ and $\alpha+\beta > \frac{1}{2}$. Substituting $|A|=\alpha\dd(G)$ and $|B|=\beta\dd(G)$ into \eqref{eq:lbX} gives
\begin{align*}
|X| &> \tfrac{1}{2}(\alpha\dd(G)+\beta\dd(G))\dd(G) - \tfrac{1}{2}\alpha\dd(G)(\alpha\dd(G)-1) - \tfrac{1}{2}\beta\dd(G)(\beta\dd(G)-1) \\
&= \tfrac{1}{2}\dd(G)^2(\alpha + \beta - \alpha^2 - \beta^2) + \tfrac{1}{2}\dd(G)(\alpha+\beta) \\
&= \tfrac{1}{2}\dd(G)^2(\alpha + \beta - \alpha^2 - \beta^2 + \alpha s +\beta s) \\
&= \tfrac{1}{2}\dd(G)^2((1+s)\alpha + (1+s)\beta - \alpha^2 - \beta^2).
\end{align*}
In Appendix~\ref{section:fmin} we prove that $(1+s)\alpha + (1+s)\beta - \alpha^2 - \beta^2 \geq \frac{1}{4} + \frac{3}{2}s - 2s^2$. Hence \begin{equation*}\tw(L(G))+1 \geq |X| > \tfrac{1}{2}\dd(G)^2(\tfrac{1}{4} + \tfrac{3}{2}s - 2s^2) = \tfrac{1}{8}\dd(G)^2 + \tfrac{3}{4}\dd(G) - 1. \qedhere
\end{equation*}
\end{proof}

Consider the case when $G=P_{n}^{k}$, the $k^{th}$-power of an $n$-vertex path. As $n \rightarrow \infty$, $\dd(G) = 2k - \gamma$ where $\gamma \rightarrow 0$. So Theorem~\ref{theorem:avgdegintro} states that $\tw(L(G)) > \frac{1}{2}k^2 + \frac{3}{2}k - 2 - \gamma(\frac{1}{2}k + \frac{3}{4} - \frac{1}{8}\gamma)$. Since $\frac{1}{2}k^2 + \frac{3}{2}k - 2$ is an integer, $\tw(L(G)) \geq \frac{1}{2}k^2 + \frac{3}{2}k - 2$. 
For an upper bound take a path decomposition of $L(G)$ in the form suggested by Lemma~\ref{lemma:goodpd}, ordering the base nodes in the same order as in the path in $G$. The largest bag contains $(\sum_{i=1}^{k-1} i) + 2k = \frac{1}{2}(k^2 - k) + 2k = \frac{1}{2}k^2 + \frac{3}{2}k$. Hence $\pw(L(P_{n}^{k})) \leq \frac{1}{2}k^2 + \frac{3}{2}k-1,$ and thus Theorem~\ref{theorem:avgdegintro} is almost precisely sharp for both treewidth and pathwidth---it is out by only $1$. 


\section{Lower Bound in Terms of Minimum Degree}
\label{section:mindeg}

We use similar techniques to those in Section~\ref{section:avgdeg} to prove a lower bound on $\tw(L(G))$ in terms of $\delta(G)$ instead of $\dd(G)$. This bound is superior when $G$ is regular or close to regular. Because this proof is so similar to that of Lemma~\ref{lemma:avgdeg}, we omit some of the details. However, we also take particular care with lower order terms, so that this result is sharp.

\begin{proof}[Proof of Theorem~\ref{theorem:mindegintro}.]
If $\delta(G) < 2$, then the result is trivial, since $\tw(L(G)) \geq 0$ whenever $L(G)$ contains at least one vertex. Now assume that $\delta(G) \geq 2$.

Let $(T,\mathcal{X})$ be a tree decomposition for $L(G)$ as guaranteed by Theorem~\ref{theorem:goodtd}.
For each node $u$ of $T$, let $T_{u}$ denote the subtree of $T$ rooted at $u$ containing exactly $u$ and the descendants of $u$. For any $T_u$, let $\zz(T_u)$ be the set of vertices of $G$ with base nodes in $T_u$.

Call a node $u$ of $T$ \emph{significant} if $|\zz(T_u)| > \frac{1}{2}\delta(G)$ but $|\zz(T_v)| \leq \frac{1}{2}\delta(G)$ for each child $v$ of $u$.
There exists a non-root, non-leaf significant node $u$.
This result follows by a argument similar to Claim~\ref{claim:avgsig}; run a similar traversal but only traverse down an edge when $|\zz(T_u)| > \frac{1}{2}\delta(G)$.
Let $a,b$ be the children of $u$, and define $A := \zz(T_{a})$ and $B:=\zz(T_{b})$. Hence $|A|,|B| \leq \frac{1}{2}\delta(G)$ and $|A|+|B| > \frac{1}{2}\delta(G)$. Since $|A|,|B|$ are integers, if $\delta(G)$ is odd then $|A|+|B| \geq \frac{1}{2}\delta(G)+\frac{1}{2}$, and if $\delta(G)$ is even then $|A|+|B| \geq \frac{1}{2}\delta(G)+1$. It also follows that $|A|,|B| \geq 1$. Define $\alpha,\beta,s$ such that $|A|=\alpha\delta(G)$, $|B| = \beta\delta(G)$ and $s = \frac{1}{\delta(G)}$. Thus
\begin{align*}
s \leq \alpha,&\beta \leq \frac{1}{2} \\
\alpha + \beta &\geq \begin{cases}
\frac{1}{2} + \frac{1}{2}s \text{ when $\delta(G)$ is odd}\\
\frac{1}{2} + s \text{ when $\delta(G)$ is even}
\end{cases}
\end{align*} 
Let $X$ be the bag indexed by $u$. Our goal is to show that $|X|$ is large. As in Lemma~\ref{lemma:avgdeg}, $$|X| = |\ee(A,V(G)-A)| + |\ee(B,V(G)-B)| - |\ee(A,B)|.$$
Note the following:
\begin{align*}
|\ee(A,V(G)-A)| &\geq \left(\sum_{v \in A} \deg(v) - |A|+1\right) \geq |A|\delta(G) - |A|^2 + |A| = ((1+s)\alpha - \alpha^2)\delta(G)^2.
\end{align*}
A similar result holds for $|\ee(B,V(G)-B)|$, and $|\ee(A,B)| \leq |A||B| = \alpha\beta\delta(G)^2$. Hence $$|X| \geq ((1+s)\alpha - \alpha^2 + (1+s)\beta - \beta^2 - \alpha\beta)\delta(G)^2.$$
In Appendix~\ref{section:fprimemin} we prove that
$$(1+s)\alpha - \alpha^2 + (1+s)\beta - \beta^2 - \alpha\beta \geq
\begin{cases}
\frac{1}{4}+s &\text{ when $\delta(G)$ is even}\\
\frac{1}{4}+s-\frac{1}{4}s^2 &\text{ when $\delta(G)$ is odd}.
\end{cases}$$
Thus 
\begin{equation*}\tw(L(G))+1 \geq |X| \geq \begin{cases}
\frac{1}{4}\delta(G)^2+\delta(G) &\text{ when $\delta(G)$ is even}\\
\frac{1}{4}\delta(G)^2+\delta(G)-\frac{1}{4} &\text{ when $\delta(G)$ is odd}.
\end{cases}\end{equation*}
\end{proof}

We now show that Theorem~\ref{theorem:mindegintro} is sharp. Let $C_{n}^{k}$ be the $k^{th}$-power of an $n$-vertex cycle $(1,\dots,n)$. Let the $i^{th}$ node in an $n$-vertex path be the base node for the $i^{th}$ vertex of $C_{n}^{k}$. It is easily seen each resulting bag has size at most $k^2 + 2k$. So $\pw(L(C_{n}^{k})) \leq k^2 + 2k -1 = \frac{1}{4}\delta(C_{n}^{k})^2 + \delta(C_{n}^{k}) - 1$, since $\delta(C_{n}^{k})=2k$. Hence Theorem~\ref{theorem:mindegintro} is precisely sharp when $\delta(G)$ is even. Now consider the odd case. Define the matching $X_1 := \{1(n-k+1),2(n-k+2),\dots,kn\}$, and if $n$ is even, also define the matching $X_2 := \{(k+1)(k+2), (k+3)(k+4), \dots, (n-k-1)(n-k)\}$. If $n$ is odd, let $H$ be the graph obtained from $C_{n}^{k}$ by deleting $X_1$; if $n$ is even instead delete $X_1 \cup X_2$. Then using the same base node assignment as above, it is easily seen that $$\pw(L(H)) \leq \begin{cases} &k^2 + k -1 \text{ if $n$ is odd,}\\ &k^2 + k -2 \text{ if $n$ is even.}\end{cases}$$ Since $\delta(H)=2k-1$, Theorem~\ref{theorem:mindegintro} is precisely sharp when $n$ is even and $\delta(G)$ is odd, and within `$+1$' when $n,\delta(G)$ are both odd.
Finally, applying Theorem~\ref{theorem:mindegintro} when $G=K_n$ agrees with the exact determination of $\pw(L(K_n))$ as given in \citep{mylinegraph-jgt,mythesis}, for both even and odd cases.


\section{Upper Bounds}
\label{sec:upper}

%
%
\begin{proof}[Proof of Theorem~\ref{theorem:maxdegintro}.]
Let $(T,\mathcal{X})$ be a tree decomposition of $G$ with width $k-1$ such that $T$ has maximum degree at most 3. By the discussion in Section~\ref{sec:intro}, we may assume that $\Delta(G) \geq k-1$. (The existence of such a $(T,\mathcal{X})$ is well known, and follows by a similar argument to Theorem~\ref{theorem:goodtd}.)

Say a vertex $v$ of $G$ is \emph{small} if $\deg(v) \leq k-1$ and \emph{large} otherwise. For each $v \in V(G)$, let $T_v$ denote the subtree of $T$ induced by the bags containing $v$. For each edge $e \in E(T)$, let $A(e),B(e)$ denote the two component subtrees of $T-e$. If $e$ is also an edge of $T_v$ for some $v$, then let $A(e,v),B(e,v)$ denote the two component subtrees of $T_v - e$, where $A(e,v) \subseteq A(e)$ and $B(e,v) \subseteq B(e)$. Let $\alpha(e,v)$ denote the set of neighbours of $v$ that appear in a bag of $A(e,v)$ and $\beta(e,v)$ denote the set of neighbours of $v$ that appear in a bag of $B(e,v)$.
Any vertex in both of these sets must be in the bags at both ends of $e$, but cannot be $v$ itself, and so $|\alpha(e,v) \cap \beta(e,v)| \leq k-1$.

\begin{claim}
\label{claim:agoodedge}
For every large $v \in V(G)$ there exists an edge $e \in T_v$ such that $|\alpha(e,v)|,|\beta(e,v)| \leq \frac{2}{3}\deg(v) + \frac{1}{3}(k-1)$. 
Moreover, if $T_v$ is a path, then there exists an edge $e \in T_v$ such that $|\alpha(e,v)|,|\beta(e,v)| \leq \frac{1}{2}\deg(v) + \frac{1}{2}(k-1)$.
\end{claim}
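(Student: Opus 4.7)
My strategy is a counting argument by contradiction: orient every edge of $T_v$ toward its oversized side, find a node whose incident edges all point inward, and derive a numerical contradiction from the bag at that node.

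I first record the key inequality. Every neighbour $w$ of $v$ lies in some bag of $T_v$ (namely one containing both $v$ and $w$), so $\alpha(e,v) \cup \beta(e,v) = N(v)$. Combining this with $|\alpha(e,v) \cap \beta(e,v)| \leq k-1$, noted in the paragraph preceding the claim, gives
\[
|\alpha(e,v)| + |\beta(e,v)| \leq \deg(v) + (k-1)
\]
for every edge $e$ of $T_v$. Write $M := \tfrac{2}{3}\deg(v) + \tfrac{1}{3}(k-1)$. Because $v$ is large, $\deg(v) \geq k$, so $2M > \deg(v) + (k-1)$ (hence at most one of $|\alpha(e,v)|$, $|\beta(e,v)|$ can strictly exceed $M$) and $M \geq k-1$.

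Assume for contradiction that no edge satisfies the claim: then every edge $e$ has exactly one oversized side, and I orient $e$ toward it. The total out-degree under this orientation is $|V(T_v)|-1 < |V(T_v)|$, so some node $u \in V(T_v)$ has out-degree zero; i.e.\ every edge incident to $u$ points toward $u$. A leaf cannot be such a node: its single incident edge $e$ has $|\beta(e,v)|$ equal to the number of neighbours of $v$ in the single bag at the leaf, hence $|\beta(e,v)| \leq k-1 \leq M$. Thus $u$ is internal, with $T_v$-degree $d_u \in \{2,3\}$.

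To finish I partition $N(v)$ by branch of $u$ in $T_v$. Set $h := |N(v) \cap X_u| \leq k-1$, and for each edge $e_i$ incident to $u$ let $b_i$ be the number of neighbours $w$ with $T_{v,w}$ entirely contained in the branch $A(e_i)$ opposite $u$. Because $T_{v,w}$ is a subtree, it either contains $u$ (so $w \in X_u$) or lies in exactly one branch, giving $\deg(v) = h + \sum_i b_i$. A direct count using $B(e_i) = \{u\} \cup \bigcup_{j \neq i} A(e_j)$ shows that $|\beta(e_i,v)| = h + \sum_{j \neq i} b_j$. Plugging in $|\beta(e_i,v)| > M$ for every $i$: when $d_u = 3$ this forces $\sum_i b_i < 3(\deg(v)-M) = \deg(v)-(k-1)$, contradicting $\sum_i b_i = \deg(v) - h \geq \deg(v)-(k-1)$; when $d_u = 2$ it forces $\deg(v)+h > 2M$, so $h > \tfrac{1}{3}\deg(v) + \tfrac{2}{3}(k-1) > k-1$ (using $\deg(v) > k-1$), contradicting $h \leq k-1$.

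For the second (path) statement the same argument applies with $M' := \tfrac{1}{2}\deg(v) + \tfrac{1}{2}(k-1)$: every internal node of a path has $d_u = 2$, and the identity $2M' = \deg(v) + (k-1)$ turns $\deg(v)+h > 2M'$ directly into $h > k-1$, again contradicting $h \leq k-1$; the leaf step is unchanged since $M' \geq k-1$ whenever $\deg(v) \geq k$. I expect the main obstacle to be the bookkeeping at $u$---pinning down the formula $|\beta(e_i,v)| = h + \sum_{j \neq i} b_j$ from the branch decomposition and matching the two numerical consequences to the two possible values of $d_u$; once those are in place, the orientation-averaging step runs immediately.
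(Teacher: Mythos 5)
Your proof is correct and follows essentially the same strategy as the paper: orient each edge of $T_v$ toward its oversized side, locate a sink $u$, and derive a numerical contradiction from the bag at $u$. Your bookkeeping via $h$ and the $b_i$ makes explicit the same double-counting the paper performs when it observes that $\sum_i |\beta(e_i,v)|$ counts neighbours outside $X_u$ twice (when $d=3$) and those in $X_u$ three times, and your upfront dismissal of leaves cleanly absorbs the paper's $d=1$ case.
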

\begin{proof}
Assume for the sake of a contradiction that no such $e$ exists. Hence for all $e \in T_v$, either $|\alpha(e,v)|$ or $|\beta(e,v)|$ is too large. Direct the edge $e$ towards $A(e,v)$ or $B(e,v)$ respectively. (If both $|\alpha(e,v)|,|\beta(e,v)|$ are too large, then direct $e$ arbitrarily.) Given this orientation of $T_v$, there must be a sink, which we label $u$, and label the bag of $u$ by $X_u$.

Let $e_{1}, \dots, e_{d}$ be the edges of $T$ incident to $u$, where $d \in \{1,2,3\}$. Without loss of generality say that $e_i$ was directed towards $B(e_i,v)$ for all $e_i$. 

First, consider the case when $T_v$ is not a path. Hence $|\beta(e_i,v)| > \frac{2}{3}\deg(v) + \frac{1}{3}(k-1)$ for all $i$. 
If $d=3$, then $\sum_{i=1}^{3} |\beta(e_i,v)| > 2\deg(v) + (k-1)$. However, $\sum_{i=1}^{3} |\beta(e_i,v)|$ counts every neighbour of $v$ that is not in $X_u$ twice, since each subtree of $T_v - u$ is in $\beta(e_i,v)$ for two choices of $i$. It counts the neighbours of $v$ in $X_u$ three times, and there are at most $k-1$ of these (since $v \in X_u$). Thus $\sum_{i=1}^{3} |\beta(e_i,v)| \leq 2\deg(v) + (k-1)$, which is a contradiction.
If $d=2$, then $\sum_{i=1}^{2} |\beta(e_i,v)| > \frac{4}{3}\deg(v) + \frac{2}{3}(k-1)$. However, $\sum_{i=1}^{2} |\beta(e_i,v)|$ counts every neighbour of $v$ not in $X_u$ once, and every neighbour of $v$ in $X_u$ twice, so $\sum_{i=1}^{2} |\beta(e_i,v)| \leq \deg(v) + (k-1)$. But then $\deg(v) < k-1$, contradicting the fact that $v$ is large.
If $d=1$, then $|\beta(e_1,v)| > \frac{2}{3}\deg(v) + \frac{1}{3}(k-1)$. However, $\beta(e_1,v)$ is contained within $X-u$ and so $|\beta(e_1,v)| \leq k-1$, and again $\deg(v) < k-1$, a contradiction.

Now, consider the case when $T_v$ is a path. Hence $|\beta(e_1,v)|,|\beta(e_2,v)| > \frac{1}{2}\deg(v) + \frac{1}{2}(k-1)$.
If both $e_i$ exist then $\sum_{i=1}^{2} |\beta(e_i,v)| > \deg(v) + (k-1)$, but $\sum_{i=1}^{2} |\beta(e_i,v)|$ counts every neighbour of $v$ not in $X_u$ once, and the neighbours of $v$ in $X_u$ twice. Thus $\sum_{i=1}^{2} |\beta(e_i,v)| \leq \deg(v) + (k-1)$, a contradiction.
If $u$ has degree 1, then $|\beta(e_1,v)| > \frac{1}{2}\deg(v) + \frac{1}{2}(k-1)$ but $\beta(e_1,v)$ is contained within $X_u$, and so $\deg(v) < k-1$, a contradiction.
\end{proof}

For each small vertex $v$ of $G$, arbitrarily select a base node in $T_v$. For each large vertex $v$ of $G$, select an edge $e$ of $T_v$ as guaranteed by Claim~\ref{claim:agoodedge}. Subdivide $e$ and declare the new node to be $\bb(v)$, the base node of $v$. If $e$ is selected for several different vertices, then subdivide it multiple times and assign a different base node for each vertex of $G$ that selected $e$. Denote the tree $T$ after all of these subdivisions as $T'$. Together, this underlying tree $T'$ and the assignment $\bb$ gives a tree decomposition of $L(G)$ in the same form as Lemma~\ref{lemma:basenodes}. Label the set of bags for this tree decomposition by $\mathcal{X}'$, so the tree decomposition of $L(G)$ is $(T',\mathcal{X}')$. It remains to bound the width of this tree decomposition.

For each bag $X'$ of $\mathcal{X}'$, define a \emph{corresponding bag} in $\mathcal{X}$ as follows. If $X'$ is indexed by a node $x$ in $T'$ that is also in $T$, then the corresponding bag is simply the bag of $\mathcal{X}$ indexed by $x$ in $T$. If $X'$ is indexed by a subdivision node created by subdividing the edge $e$, then the corresponding bag is one of the bags of $\mathcal{X}$ indexed by the endpoints of $e$, chosen arbitrarily.

The following two claims give enough information to bound the width of $(T',\mathcal{X}')$.

\begin{claim}
\label{claim:correspond}
If $X'$ is a bag of $\mathcal{X}'$ with corresponding bag $X$, and $vw$ is an edge of $G$ in $X'$, then $v \in X$ or $w \in X$. 
\end{claim}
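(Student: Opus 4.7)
The plan is to lift each subtree $T_v$ from $T$ to $T'$ and then show that, for every $vw \in E(G)$, the path in $T'$ from $\bb(v)$ to $\bb(w)$ stays inside the union of the two lifts. The claim will then follow by a short case analysis on what kind of node indexes $X'$.

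For each $u \in V(G)$, let $T'_u$ be the subgraph of $T'$ consisting of every node of $T_u$ together with every subdivision node that sits on the subdivision of some edge of $T_u$. This is a subtree of $T'$ (it is the natural blow-up of the subtree $T_u$ under the subdivisions performed during the construction of $T'$). Moreover $\bb(u) \in T'_u$: if $u$ is small then $\bb(u) \in T_u$ by construction; if $u$ is large then $\bb(u)$ is a subdivision node placed on an edge of $T_u$ selected via Claim~\ref{claim:agoodedge}, and so still lies in $T'_u$. The key step is to show that for every $vw \in E(G)$, the $\bb(v)$--$\bb(w)$ path in $T'$ is contained in $T'_v \cup T'_w$. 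Since $vw \in E(G)$ and $(T,\mathcal{X})$ is a tree decomposition, some bag contains both $v$ and $w$, so $T_v \cap T_w$ is nonempty; pick any $z \in T_v \cap T_w \subseteq T'_v \cap T'_w$, and let $m$ be the median of $\bb(v), \bb(w), z$ in the tree $T'$. The subpath from $\bb(v)$ to $m$ lies in the unique $\bb(v)$--$z$ path of $T'$, hence inside the subtree $T'_v$; symmetrically the subpath from $m$ to $\bb(w)$ lies in $T'_w$.

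With this established, suppose $vw \in X'$ and let $p$ be the node of $T'$ indexing $X'$. Because $(T',\mathcal{X}')$ has the form guaranteed by Lemma~\ref{lemma:basenodes}, membership of $vw$ in $X'$ is equivalent to $p$ lying on the $\bb(v)$--$\bb(w)$ path in $T'$; by the previous step, $p \in T'_v \cup T'_w$. If $p$ is an original node of $T$, then the corresponding bag $X$ is the bag of $\mathcal{X}$ at $p$, and $p \in T_v$ (respectively $p \in T_w$) gives $v \in X$ (respectively $w \in X$). If $p$ is a subdivision node on an edge $e = xy$ of $T$, then $e$ belongs to $T_v$ or to $T_w$, so both $x$ and $y$ have the corresponding vertex of $\{v,w\}$ in their bags of $\mathcal{X}$; hence the arbitrary choice of $X$ among the two endpoint bags still contains $v$ or $w$. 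The only piece with any real content is the median observation in the second paragraph; everything else is a routine unpacking of the construction of $T'$ and of the correspondence $X' \mapsto X$.
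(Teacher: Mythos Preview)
Your proof is correct and follows essentially the same idea as the paper's: both arguments rest on the fact that $\bb(v)$ and $\bb(w)$ lie in the lifts of $T_v$ and $T_w$ to $T'$, and that $T_v\cap T_w\neq\emptyset$ forces the $\bb(v)$--$\bb(w)$ path to stay within these lifts. The paper packages this as a short contradiction argument (if $v,w\notin X$ then $T_v,T_w$ lie in one component of $T-X$, so the path in $T'$ misses the node indexing $X'$), whereas you argue directly via the median and then do an explicit case split on whether that node is original or a subdivision vertex; the content is the same.
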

\begin{proof}
Assume for the sake of a contradiction that $vw \in X'$ but neither $v$ nor $w$ is in $X$. Hence $X \notin V(T_v) \cup V(T_w)$. Thus $T_v$ and $T_w$ are contained in $T-X$. If $T_v$ and $T_w$ are contained in different components of $T-X$, then $V(T_v) \cap V(T_w) = \emptyset$, but this is not possible given that $vw \in E(G)$. Thus $T_v$ and $T_w$ are contained in the same component of $T-X$. However, $\bb(v)$ and $\bb(w)$ are assigned inside of $T_v$ and $T_w$ respectively (perhaps after some edges are subdivided, but this does not alter their positions relative to $X$). Hence the path from $\bb(v)$ to $\bb(w)$ in $T'$ does not include $X'$, and so $vw \notin X'$. This is a contradiction.
\end{proof}

\begin{claim}
\label{claim:limitedge}
If $v$ is a large vertex and $X' \in \mathcal{X}'$ is not $\bb(v)$, then $X'$ contains at most $\frac{2}{3}\deg(v) + \frac{1}{3}(k-1)$ edges incident to $v$. Moreover, if $T_v$ is a path, then $X'$ contains at most $\frac{1}{2}\deg(v) + \frac{1}{2}(k-1)$ edges incident to $v$.
\end{claim}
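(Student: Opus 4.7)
The plan is to leverage Claim~\ref{claim:agoodedge}: since $v$ is large, $\bb(v)$ was produced by subdividing some edge $e \in T_v$ chosen so that $|\alpha(e,v)|, |\beta(e,v)| \leq \frac{2}{3}\deg(v) + \frac{1}{3}(k-1)$. The subdivision node $\bb(v)$ has degree exactly $2$ in $T'$, so $T' - \bb(v)$ has precisely two components. Let $x,y$ be the endpoints of $e$ in $T$, and call the component containing $x$ the $A$-side and the other the $B$-side. Because $X' \neq \bb(v)$, $X'$ lies in exactly one of these; without loss of generality assume it lies on the $A$-side. Any edge $vw$ of $G$ in $X'$ satisfies $X' \in V(S_{vw})$, and since $S_{vw}$ is the path from $\bb(v)$ to $\bb(w)$ in $T'$, this forces $\bb(w)$ to lie on the $A$-side as well.

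Next, I will show that every such $w$ belongs to $\alpha(e,v)$, i.e., $w$ appears in some bag of $A(e,v) = T_v \cap A(e)$. Each edge of $T$ lies entirely in $A(e)$, entirely in $B(e)$, or equals $e$, and $\bb(w)$ is either an original node of $T_w$ or a subdivision of some edge $e_w \in T_w$; being on the $A$-side of $\bb(v)$ then forces $T_w$ to contain at least one node of $A(e)$. A short case split yields a node of $T_v \cap T_w$ inside $A(e)$: if $T_w \subseteq A(e)$ then the nonempty $T_v \cap T_w$ lies entirely in $A(e)$; otherwise $T_w$ crosses $e$, so $\{x,y\} \subseteq V(T_w)$, and since $e \in T_v$ already gives $x \in V(T_v)$, we have $x \in T_v \cap T_w \cap A(e)$. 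Either way, $w \in \alpha(e,v)$.

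Combining the two steps, the neighbours $w$ of $v$ with $vw \in X'$ form a subset of $\alpha(e,v)$, yielding at most $\frac{2}{3}\deg(v) + \frac{1}{3}(k-1)$ such edges; the symmetric argument handles the $B$-side. When $T_v$ is a path, invoking the stronger half of Claim~\ref{claim:agoodedge} tightens the bounds on $|\alpha(e,v)|$ and $|\beta(e,v)|$ to $\frac{1}{2}\deg(v) + \frac{1}{2}(k-1)$ and gives the sharper conclusion. The main obstacle I anticipate is the case analysis above, particularly handling the scenario where $e$ itself was subdivided for $w$ as well (so $\bb(w)$ sits on $e$ between $\bb(v)$ and $x$); this is precisely the case $e \in T_w$, which the second sub-case dispatches immediately because both $x$ and $y$ then lie in $V(T_v) \cap V(T_w)$.
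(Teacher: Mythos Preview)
Your proposal is correct and follows essentially the same approach as the paper: split $T'-\bb(v)$ into two sides, locate $X'$ on one side, observe that any $vw \in X'$ forces $\bb(w)$ onto that side, and conclude $w \in \alpha(e,v)$ (or $\beta(e,v)$), then invoke Claim~\ref{claim:agoodedge}. The only difference is that the paper simply asserts ``without loss of generality $w$ appears in a bag of $A(e,v)$'' while you spell out the case analysis (whether $T_w \subseteq A(e)$ or $T_w$ crosses $e$) that justifies it; your extra care, including the $e \in T_w$ sub-case, is correct and makes the step airtight.
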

\begin{proof}
As $v$ is large, $\bb(v)$ is a subdivision node, and $T'-\bb(v)$ has two components. Label these subtrees $L_v$ and $R_v$, and let $e \in E(T)$ be the edge that was subdivided to create $\bb(v)$. Let $w$ be a neighbour of $v$ such that $\bb(w)$ is in $L_v$. (Note each neighbour is in $L_v$ or $R_v$ as $\bb(v) \neq \bb(w)$.) Thus, without loss of generality, $w$ appears in a bag of $A(e,v)$, and so $w \in \alpha(e,v)$. By Claim~\ref{claim:agoodedge}, $|\alpha(e,v)| \leq \frac{2}{3}\deg(v) + \frac{1}{3}(k-1)$, and so at most $\frac{2}{3}\deg(v) + \frac{1}{3}(k-1)$ neighbours of $v$ may have their base node in $L_v$, and the same bound holds for $R_v$.

If $vw$ is an edge in $X'$, then $X'$ is on the unique path in $T'$ between $\bb(v)$ and $\bb(w)$, without being $\bb(v)$ itself. So $X'$ is, without loss of generality, in $L_v$, and then so is $\bb(w)$. Hence there are at most $\frac{2}{3}\deg(v) + \frac{1}{3}(k-1)$ such choices of $w$ and hence $\frac{2}{3}\deg(v) + \frac{1}{3}(k-1)$ edges in $X'$ incident to $v$.

If $T_v$ is a path, then the result follows from the alternate upper bound in Claim~\ref{claim:agoodedge}.
\end{proof}

We now determine an upper bound on the size of a bag $X' \in \mathcal{X'}$. We count the edges of $X'$ by considering the number of edges a given vertex $v$ of $G$ contributes to $X'$. By Claim~\ref{claim:correspond}, only the at most $k$ vertices of the corresponding bag $X$ contribute anything to $X'$.

\begin{itemize*} 
\item If $v$ is small, it contributes at most $\deg(v) \leq k-1$ edges to $X'$.
\item If $v$ is large and $X' \neq \bb(v)$, then by Claim~\ref{claim:limitedge}, $v$ contributes at most $\frac{2}{3}\Delta(G) + \frac{1}{3}(k-1)$ edges to $X'$. Given that $\Delta(G) \geq k-1$, this is at least $k-1$.
\item If $v$ is large and $X' = \bb(v)$, then $v$ contributes at most $\Delta(G)$ edges. This is at least $\frac{2}{3}\Delta(G) + \frac{1}{3}(k-1)$ as $\Delta(G) \geq k-1$. However, $X' = \bb(v)$ for at most one $v$.
\end{itemize*}

So in the worst case, there are $k$ vertices in the corresponding bag, all of which are large and contribute the maximum number of edges, which is $\frac{2}{3}\Delta(G) + \frac{1}{3}(k-1)$ for $k-1$ vertices and $\Delta(G)$ for one vertex.
Hence 
\begin{align*}
|X'| &\leq (k-1)(\tfrac{2}{3}\Delta(G) + \tfrac{1}{3}(k-1)) + \Delta(G)
= \tfrac{2}{3}k\Delta(G) + \tfrac{1}{3}(k-1)^2 + \tfrac{1}{3}\Delta(G).
\end{align*}

If we set $(T,\mathcal{X})$ to be a minimum width tree decomposition, then $k-1 = \tw(G)$, and so
$$\tw(L(G)) \leq \tfrac{2}{3}(\tw(G)+1)\Delta(G) + \tfrac{1}{3}\tw(G)^2 + \tfrac{1}{3}\Delta(G)-1.$$

Alternatively, if we let $(T,\mathcal{X})$ be a minimum width path decomposition, then $k-1 = \pw(G)$, and we can use the alternate upper bound in Claim~\ref{claim:limitedge} given that $T_v$ is always a path. Since $T'$ was created by subdividing edges, $T'$ is also a path. Hence

\begin{equation*}\pw(L(G)) \leq \tfrac{1}{2}(\pw(G)+1)\Delta(G) + \tfrac{1}{2}\pw(G)^2 + \tfrac{1}{2}\Delta(G)-1. \qedhere \end{equation*}
\end{proof} 

We now consider a few extensions of Theorem~\ref{theorem:maxdegintro}. For an outerplanar graph $G$, which has treewidth at most $2$, \eqref{eq:atsbound} proves that $\tw(L(G)) \leq 3\Delta(G)-1$. Theorem~\ref{theorem:maxdegintro} proves that $\tw(L(G)) \leq \frac{7}{3}\Delta(G) + \frac{1}{3}$. We can do better as follows.

\begin{corollary}
If $G$ is outerplanar, then $\tw(L(G)) \leq 2\Delta(G)+1.$
\end{corollary}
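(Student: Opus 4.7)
The plan is to construct a specific width-$2$ tree decomposition of $G$ to which the argument of Theorem~\ref{theorem:maxdegintro} can be applied using the stronger ``path case'' bounds from Claim~\ref{claim:agoodedge} and Claim~\ref{claim:limitedge}.

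First, I would extend $G$ to a maximal outerplanar supergraph $G'$ by adding chords inside the outer face to triangulate all interior faces (the cases $|V(G)| \leq 2$ are trivial, since then $L(G)$ has at most one vertex). Let $T$ be the weak dual of $G'$: its nodes are the interior triangular faces, with two nodes adjacent exactly when the corresponding triangles share an edge. Since each triangle has only three sides, $T$ is a subcubic tree. Taking the three vertices of each triangle as its bag yields a tree decomposition $(T, \mathcal{X})$ of $G'$ of width $2$, and since $G \subseteq G'$, this is also a tree decomposition of $G$ of width at most $2$.

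The key structural observation is that for every $v \in V(G)$, the subtree $T_v$ of bags containing $v$ is a path, because in a maximal outerplanar graph the interior triangles incident to $v$ form a fan around $v$ whose weak-dual subgraph is a path. Now apply the argument of Theorem~\ref{theorem:maxdegintro} to this decomposition with $k = 3$; we may assume $\Delta(G) \geq k-1 = 2$. Since every $T_v$ is a path, the alternative bound in Claim~\ref{claim:agoodedge} gives $|\alpha(e,v)|, |\beta(e,v)| \leq \tfrac{1}{2}\deg(v) + 1$ for each large $v$, so by the alternative bound in Claim~\ref{claim:limitedge} a large vertex $v$ contributes at most $\tfrac{1}{2}\deg(v) + 1 \leq \tfrac{1}{2}\Delta(G) + 1$ edges to any bag $X' \neq \bb(v)$. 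Small vertices contribute at most $\deg(v) \leq 2 \leq \tfrac{1}{2}\Delta(G) + 1$ edges.

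Each bag $X'$ has a corresponding bag containing at most $3$ vertices (by Claim~\ref{claim:correspond}), so in the worst case one vertex $v$ with $X' = \bb(v)$ contributes up to $\Delta(G)$ edges and the other two vertices contribute at most $\tfrac{1}{2}\Delta(G) + 1$ each. Hence
\[
|X'| \leq \Delta(G) + 2\bigl(\tfrac{1}{2}\Delta(G) + 1\bigr) = 2\Delta(G) + 2,
\]
so $\tw(L(G)) \leq 2\Delta(G) + 1$. The main obstacle is justifying the fan property that forces $T_v$ to be a path for every $v \in V(G)$; once this standard fact about maximal outerplanar graphs is in hand, the remainder of the proof is a direct specialisation of the machinery developed for Theorem~\ref{theorem:maxdegintro}.
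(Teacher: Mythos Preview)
Your proposal is correct and follows essentially the same approach as the paper: both take the weak dual of an outerplanar triangulation of $G$ as a width-$2$ tree decomposition in which every $T_v$ is a path, and then invoke the ``path case'' of the argument in Theorem~\ref{theorem:maxdegintro} (via Claims~\ref{claim:agoodedge} and~\ref{claim:limitedge}) to obtain the bound $\tfrac{1}{2}(\tw(G)+1)\Delta(G)+\tfrac{1}{2}\tw(G)^2+\tfrac{1}{2}\Delta(G)-1 = 2\Delta(G)+1$. The paper's proof sketch is terser, but the content is the same.
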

\begin{proof}[Proof Sketch]
In Theorem~\ref{theorem:maxdegintro}, if it were possible to select a tree decomposition such that $T_v$ was a path for each $v \in V(G)$, then it would be possible to achieve an upper bound of $\tw(L(G)) \leq \tfrac{1}{2}(\tw(G)+1)\Delta(G) + \tfrac{1}{2}\tw(G)^2 + \tfrac{1}{2}\Delta(G)-1.$ Since $G$ is outerplanar, let $G'$ be an outerplanar triangulation such that $G \subseteq G'$, and let $T$ be the weak dual of $G'$. Take $(T,(B_{x})_{x \in V(T)})$ as the tree decomposition of $G$, where the bag $B_x$ is the set of three vertices on the boundary of the face corresponding to $x \in V(T)$. Note that this tree decomposition has width $2$ and $T_v$ is a path for all $v \in V(G)$. Hence the result follows.
\end{proof}

It is plausible that Theorem~\ref{theorem:maxdegintro} can be further improved. 
The following conjecture is the strongest possible in this direction. 

\begin{conjecture}
\label{conjecture:strong}
For every graph $G$ with maximum degree $\Delta(G)$, $$\tw(L(G)) \leq \tfrac{1}{2}(\tw(G)+1)\Delta(G) - 1.$$
\end{conjecture}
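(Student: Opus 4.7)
The plan is to refine the proof of Theorem~\ref{theorem:maxdegintro}, where the coefficient $\tfrac{2}{3}$ comes entirely from Claim~\ref{claim:agoodedge}: the tree case of that claim only guarantees a $\tfrac{2}{3}$-balanced split of $T_v$, whereas the path case gives a $\tfrac{1}{2}$-balanced split. The target is therefore to show that every graph $G$ admits a tree decomposition $(T,\mathcal{X})$ of width $\tw(G)$ with the extra property that $T_v$ is a path for every $v \in V(G)$. If such a decomposition exists, the pathwidth portion of the proof of Theorem~\ref{theorem:maxdegintro} applies verbatim (subdividing an edge of each $T_v$ to create $\bb(v)$ and using the $\tfrac{1}{2}\deg(v)+\tfrac{1}{2}(k-1)$ bound from Claim~\ref{claim:agoodedge}) and yields exactly the conjectured inequality.

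This vertex-path property is strictly weaker than demanding that $T$ itself be a path: the underlying tree $T$ may branch freely, provided no single vertex of $G$ lies in bags spread across a branching point of $T$. In particular, it is not ruled out by a gap between $\pw(G)$ and $\tw(G)$. As evidence that it can sometimes be achieved at the optimal width, when $G$ is a tree one can directly build a width-$1$ tree decomposition with every $T_v$ a path by arranging the edges incident to each internal vertex into a linear order and stitching the local orderings into a global tree, even when $\pw(G)$ is large. This suggests the property is an intrinsically ``tree-friendly'' strengthening of treewidth rather than a pathwidth-style one.

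To attack the general case I would proceed by induction on $|V(G)|$. Find a balanced separator $S \subseteq V(G)$ of size at most $\tw(G)+1$ whose removal yields components $A_1,\dots,A_p$, recursively obtain vertex-path tree decompositions of each $G[A_i \cup S]$, and glue them together at a central node representing $S$, choosing a linear ordering of the occurrences of each $s \in S$ across the pieces so that the resulting $T_s$ forms a path. The main obstacle is precisely this consistency requirement: the linear orderings demanded by different $s \in S$ may conflict with one another, and resolving every such conflict simultaneously without increasing any bag beyond $\tw(G)+1$ appears genuinely delicate and may be the place where the conjecture, if true, is actually hardest. A plausible fallback, if the full vertex-path property cannot be secured at the optimal width, is to permit it to fail only on an $O(\tw(G))$-size ``interface'' set per recursive level and absorb the resulting loss into the lower-order terms; this would not prove the conjecture as stated but would at least establish $\tw(L(G)) \leq \tfrac{1}{2}(\tw(G)+1)\Delta(G) + O(\tw(G)^2)$, matching the conjecture's leading coefficient.
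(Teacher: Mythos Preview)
The statement is a conjecture that the paper explicitly leaves open; the only case it proves is trees (Proposition~\ref{prop:tree}). So there is no proof in the paper to compare against, and you are genuinely attacking an open problem.

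There is, however, a concrete gap in your plan. You assert that if every $T_v$ is a path then ``the pathwidth portion of the proof of Theorem~\ref{theorem:maxdegintro} applies verbatim \dots\ and yields exactly the conjectured inequality.'' It does not. Running the argument with the path bound from Claim~\ref{claim:agoodedge} gives, for each bag $X'$,
\[
|X'| \;\le\; (k-1)\bigl(\tfrac{1}{2}\Delta(G)+\tfrac{1}{2}(k-1)\bigr)+\Delta(G)
\;=\; \tfrac{1}{2}k\Delta(G)+\tfrac{1}{2}(k-1)^{2}+\tfrac{1}{2}\Delta(G),
\]
so the conclusion is only
\[
\tw(L(G)) \;\le\; \tfrac{1}{2}(\tw(G)+1)\Delta(G)+\tfrac{1}{2}\tw(G)^{2}+\tfrac{1}{2}\Delta(G)-1,
\]
with two surplus terms. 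The paper says exactly this in the proof sketch of the outerplanar corollary right after Theorem~\ref{theorem:maxdegintro}. Concretely, for $\tw(G)=2$ your route yields $2\Delta(G)+1$, whereas the conjecture asks for $\tfrac{3}{2}\Delta(G)-1$; the paper singles out even the treewidth-$2$ case as ``challenging.'' Thus your main plan, even if the vertex-path property could be secured at optimal width, proves only your fallback bound $\tfrac{1}{2}(\tw(G)+1)\Delta(G)+O(\tw(G)^{2})$, not the conjecture itself. Closing the gap between these two bounds requires a genuinely new idea beyond balancing each $T_v$; the $\tfrac{1}{2}\tw(G)^{2}$ loss comes from the $\tfrac{1}{2}(k-1)$ overlap term in Claim~\ref{claim:agoodedge}, and the extra $\tfrac{1}{2}\Delta(G)$ from the single bag that may equal $\bb(v)$, neither of which is addressed by making $T_v$ a path.
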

This conjecture seems very strong, and indeed it seems challenging even in the treewidth $2$ case. Nevertheless, we now prove it for trees, thus providing some supporting evidence.

\begin{proposition}
\label{prop:tree}
If $\tw(G)=1$ then $\tw(L(G)) = \Delta(G)-1$.
\end{proposition}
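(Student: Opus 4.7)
The lower bound $\tw(L(G)) \geq \Delta(G)-1$ is immediate from the observation already noted in the introduction: the edges of $G$ incident to any fixed vertex $v$ form a clique in $L(G)$, so $L(G)$ contains a $\Delta(G)$-clique and hence has treewidth at least $\Delta(G)-1$. So the whole task is to prove the matching upper bound $\tw(L(G)) \leq \Delta(G)-1$ when $\tw(G)=1$.

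Since $\tw(G)=1$, the graph $G$ is a forest (with at least one edge). The plan is to take the tree structure of $G$ itself as the underlying tree of a tree decomposition of $L(G)$. More precisely, assuming first that $G$ is a tree, I will set $T := G$ and, for each vertex $v \in V(G)$, define the bag indexed by $v$ to be
\[
B_v := \{ e \in E(G) : v \in e \},
\]
i.e.\ the set of edges of $G$ incident to $v$. Clearly $|B_v| = \deg_G(v) \leq \Delta(G)$, so the width of this decomposition is at most $\Delta(G)-1$.

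Next I will verify the tree-decomposition axioms for $(T,\mathcal{X})$ with $\mathcal{X}=(B_v)_{v\in V(G)}$, where the vertices of $L(G)$ are the edges of $G$. Every edge $e=xy \in E(G)$ appears in the bags $B_x$ and $B_y$, and these are indexed by adjacent nodes of $T=G$, so the set of bags containing $e$ induces a connected (two-node) subtree; this covers both the ``every vertex of $L(G)$ appears in some bag'' axiom and the ``occurrence subtree is connected'' axiom. For the ``every edge of $L(G)$ lies in some common bag'' axiom, an edge of $L(G)$ corresponds to a pair of edges $e_1,e_2\in E(G)$ sharing a common endpoint $v$; then by definition $e_1,e_2 \in B_v$, as required.

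If $G$ is a forest with several components, I apply the above construction to each component $G_i$ separately to get a tree decomposition of $L(G_i)$, and then join these tree decompositions into a single tree decomposition of $L(G)$ by linking the underlying trees arbitrarily into one tree (which does not affect the width). Combining the upper and lower bounds gives $\tw(L(G))=\Delta(G)-1$. There is essentially no hard step here: the only thing to be careful about is the (routine) verification of the tree-decomposition axioms, and the trivial extension from trees to forests.
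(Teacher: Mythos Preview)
Your proposal is correct and follows essentially the same approach as the paper: take $T=G$ itself, let the bag at each vertex $v$ be the set of edges incident to $v$ (in the paper's language, set $\bb(v)=v$), observe that each bag has size $\deg(v)\leq\Delta(G)$, and combine with the $\Delta(G)$-clique lower bound. Your write-up is simply more explicit about verifying the tree-decomposition axioms and about the forest case.
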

\begin{proof}
We may assume $G$ is a tree. Construct a tree decomposition for $L(G)$ by taking the underlying tree to be $G$ itself and letting $\bb(v)=v$. Then each bag contains exactly the edges of $G$ incident to the vertex, and so $\tw(L(G)) \leq \Delta(G)-1$. This is also a lower bound given that $L(G)$ contains a clique of order $\Delta(G)$.
\end{proof}
%
%

\section{Treewidth of \texorpdfstring{\boldmath $L(K_{p,q})$ \unboldmath}{L(K(p,q))}}
\label{sec:bipartite}

\begin{proof}[Proof of Theorem~\ref{theorem:introbipart}.] The graph $L(K_{p,q})$ is isomorphic to $K_{p} \square K_{q}$, the Cartesian product of $K_p$ and $K_q$, which can be thought of as a grid with $p$ rows and $q$ columns such that each row and column is a clique. A \emph{separator} of $G$ is a set of vertices $X$ such that $V(G-X)$ can be partitioned into at most three parts $A_1,A_2,A_3$ such that $|A_i| \leq \frac{1}{2}|V(G-X)|$ for all $i$, and no edge of $G-X$ has an endpoint in more than one part. (See \citep{mythesis,mypttt} for more explanation on separators.) A well-known result of \citet{RS-GraphMinorsII-JAlg86} states that every graph $G$ has a separator of order $\tw(G)+1$. Let $G = L(K_{p,q}) = K_p \square K_q$. It is sufficient to show that if $X$ is a separator of $G$ then $|X| \geq \tfrac{1}{2}pq$.

%
%
%
Label the parts of $V(G-X)$ by $A_1,A_2,A_3$. Clearly $|A_1|+|A_2|+|A_3| + |X| = |V(K_{p} \square K_q)| = pq$. Consider a row $R$ of $G$. No two vertices of $R$ are in different parts, since $R$ forms a clique. Thus $R$ is a subset of $A_i \cup X$ for some $i$; colour $R$ by $i$. If no vertex of $R$ is in $G-X$, then colour $R$ arbitrarily. Colour columns similarly. Thus a vertex is in $A_i$ only if its row and column are both coloured $i$. (However, such vertices are not necessarily in $A_i$.) Define $x_i,y_i,z_i$ such that $x_ip$ is the number of rows coloured $i$, $y_iq$ is the number of columns coloured $i$, and $z_ipq$ is the number of vertices not in $A_i$ whose row and column is coloured $i$. Then $|A_i|=(x_iy_i - z_i)pq$. Define $\alpha_i := \frac{|A_i|}{pq}$. 
Clearly, these variables satisfy the following \emph{basic} constraints:
\begin{align*}
0 \leq x_i,y_i \,\,\forall i \qquad 0 \leq z_i \leq x_iy_i \,\,\forall i \qquad
x_1 + x_2 + x_3 =1 \qquad y_1 + y_2 + y_3=1,
\end{align*}
and the following \emph{balance} constraints (since $|A_i| \leq \frac{1}{2}(|A_1|+|A_2|+|A_3|)$):
\begin{align*}
\alpha_1 \leq \alpha_2 + \alpha_3 \qquad 
\alpha_2 \leq \alpha_3 + \alpha_1 \qquad
\alpha_3 \leq \alpha_1 + \alpha_2.
\end{align*}

In Appendix~\ref{sec:bipartworking} we prove that $\alpha_1 + \alpha_2 + \alpha_3 \leq \frac{1}{2}$, implying $|A_1|+|A_2|+|A_3| \leq \frac{1}{2}pq$ and $|X| \geq \frac{1}{2}pq$, as desired.
\end{proof}

\section{Alternate Lower Bounds}
\label{sec:altlow}

Given the format of Theorem~\ref{theorem:maxdegintro} and Conjecture~\ref{conjecture:strong}, we might hope for some analogous lower bound in terms of minimum degree and treewidth, or average degree and treewidth. In particular, does there exist some constant $c>0$ such that any of the following hold? 
\begin{align}
\tw(L(G)) &\geq c\,\tw(G)\delta(G) &\qquad \tw(L(G)) &\geq c\,\tw(G)\dd(G) \nonumber\\*
\pw(L(G)) &\geq c\,\pw(G)\delta(G) &\qquad \pw(L(G)) &\geq c\,\pw(G)\dd(G) \label{eq:strength}
\end{align}

Each of these inequalities would be qualitative strengthenings of our results in Sections~\ref{section:avgdeg} and \ref{section:mindeg}, since $\pw(G) \geq \tw(G) \geq \delta(G)$ and $\pw(G) \geq \tw(G) > \frac{1}{2}\dd(G)$. However, we now prove that none of these inequalities hold---thanks to Bruce Reed for this example. This implies that Theorems~\ref{theorem:avgdegintro} and \ref{theorem:mindegintro} are best possible in the sense that we cannot replace $\delta(G)$ or $\dd(G)$ by $\tw(G)$.

For positive integers $n,k$ construct the following graph $H_{n,k}$. Begin with the $n \times n$ grid, and for each vertex $v$ of the grid, have $k-\deg(v)$ disjoint cliques of order $k+1$. For each such clique $C$, add a single edge from a single vertex of $C$ to $v$. Every vertex of this graph has degree $k$, except those vertices of the cliques that are adjacent to vertices of the grid, which have degree $k+1$. Hence $\delta(H_{n,k}) = k$ and $\dd(H_{n,k}) > k$. Since $H_{n,k}$ contains an $n \times n$ grid, it follows that $\tw(H_{n,k}) \geq n$. We now prove a weak upper bound on $\tw(L(H_{n,k}))$.

\begin{lemma}
\label{lemma:pwhnk}
$\tw(L(H_{n,k})) \leq \pw(L(H_{n,k})) \leq 4n+O(k^3)$.
\end{lemma}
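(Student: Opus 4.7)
The plan is to appeal to Theorem~\ref{theorem:vcngpath} (equivalently Lemma~\ref{lemma:goodpd}): it suffices to exhibit a linear ordering $\pi$ of $V(H_{n,k})$ such that the maximum number of edges of $H_{n,k}$ whose induced path in the host path contains any fixed node is at most $4n + O(k^3) + 1$. So the whole proof reduces to designing a good ordering and carefully bookkeeping the vertex congestion it produces.

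The ordering I would use is an \emph{interleaved snake}: list the $n^2$ grid vertices in the standard boustrophedon order (row $1$ left-to-right, row $2$ right-to-left, and so on), and after each grid vertex $v$, immediately insert all vertices of the cliques attached to $v$, one clique at a time, with the attachment vertex of each clique placed first within that clique. This choice has two key features. First, the grid restricted to this ordering is exactly a snake, whose cutwidth is easily shown to be $n+1$, so only $O(n)$ grid edges straddle any position. Second, since each clique's vertices form a contiguous block that directly follows its grid vertex, clique-internal edges and attachment edges live in ``local'' windows and cannot accumulate across distant grid vertices.

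To bound the congestion at an arbitrary position I would split into two cases. If the position is a grid vertex $v_i$, the edges with path through this position are: grid edges straddling $v_i$ in the snake (at most $n+2$, using the snake cutwidth) and attachment edges from $v_i$ to its attached cliques (at most $k - \deg_{\text{grid}}(v_i) \leq k$); attachment edges belonging to cliques of other grid vertices do not contribute, since each such clique is placed immediately after its own grid vertex and its attachment edge never crosses $v_i$. If the position is inside a clique $C$ attached to some grid vertex $v$, the contributing edges are: grid edges straddling $v$ in the snake (still $\leq n+1$), the attachment edge of $C$ and of any cliques of $v$ placed after $C$ (at most $k$ in total), and internal edges of $C$ passing through the position (at most $\binom{k+1}{2} = O(k^2)$). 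In both cases the total is $n + O(k^2)$, yielding $\pw(L(H_{n,k})) \leq n + O(k^2) - 1$, which is comfortably within $4n + O(k^3)$.

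The main obstacle is bookkeeping rather than any conceptual difficulty. The crucial observation one must verify is that attachment edges contribute only $O(k)$ (not $O(k^2)$) at each position, and this hinges precisely on the interleaving: because all cliques of a grid vertex $v$ are inserted directly after $v$, their attachment edges are ``born and die'' within a short window and never pile up across unrelated parts of the ordering. Confirming the snake cutwidth bound of $n+1$ and noting that internal edges of distinct cliques never interact are both routine; together with the interleaving, they make the congestion bound fall out.
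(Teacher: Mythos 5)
Your proof is correct and uses the same high-level strategy as the paper — exhibit a specific linear ordering of $V(H_{n,k})$ (equivalently a path decomposition of $L(H_{n,k})$) and bound the vertex congestion at every node — but the two constructions genuinely differ, and yours yields a sharper bound. The paper's decomposition uses an $n^2$-node path with the grid vertices in row-major order, and lets node $i$ serve as a \emph{shared} base node for grid vertex $i$ together with all vertices of its attached cliques (the set $A_i$); consequently the $O(k^3)$ edges inside $A_i$ all pile up in the single bag at node $i$, and the grid-edge contribution is bounded crudely by $4(n+1)$. Your interleaved ordering instead gives every vertex of $H_{n,k}$ its own node, placing each clique as a contiguous block immediately after its grid vertex; this confines a single clique's $\binom{k+1}{2}=O(k^2)$ internal edges to a short window, keeps the attachment-edge contribution down to $O(k)$ at every position, and a cutwidth computation gives $n+O(1)$ for the grid edges. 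You therefore obtain $n+O(k^2)$ rather than $4n+O(k^3)$ — a genuine sharpening, though both are of course within $4n+O(k^3)$ and so both prove the lemma. Incidentally, the choice of snake versus row-major order for the grid is immaterial (both have cutwidth exactly $n+1$); the real gain comes from the interleaving and per-vertex base node assignment, not the boustrophedon ordering.
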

\begin{proof}
Let $v$ be a vertex of the grid in $H_{n,k}$, and let $A_v$ be the set containing the vertex $v$ together with all vertices of the cliques $C$ where there is an edge from $C$ to $v$. The sets $A_v$ form a partition of $V(H_{n,k})$. Let $P$ be an $n^2$-node path, and label the vertices of the grid $1,\dots,n^2$ considering rows from top to bottom, and going along each row from left to right. Then let the $i^{th}$ node of $P$ be the base node for all $w \in A_i$. This defines a path decomposition of $L(H_{n,k})$. Let $X_i$ be the bag indexed by the $i^{th}$ node. By the labelling, for each edge $ab$ of the grid, $|b-a| \leq n$. Hence if $ab \in X_i$ then without loss of generality, $i-n \leq a \leq i$. Thus there are $n+1$ possible choices of $a$, and each such $a$ may contribute at most $4$ such edges, and thus $X_i$ contains at most $4n+4$ such edges. Now consider edges without both endpoints in the grid. If $w \in A_j - \{j\}$, then every neighbour of $w$ is in $A_j$, and as such the edges with at least one endpoint in $A_j - \{j\}$ appear in $X_i$ only when $i=j$. Thus $|X_i| \leq 4n+4 + |\{e: e \text{ has at least one endpoint in } A_i-\{i\}\}| \leq 4n+O(k^3)$.
\end{proof}

Each possible strengthening in \eqref{eq:strength} would imply that $\tw(L(H_{n,k})) \geq cnk$ or $\pw(L(H_{n,k})) \geq cnk$ where $c$ is some constant, which contradicts Lemma~\ref{lemma:pwhnk} for $n \gg k \gg \tfrac{1}{c}$. Hence none of these strengthenings hold.

\appendix
\section{Appendix A}
\label{section:fmin}



Here we prove that for $s \leq \alpha,\beta \leq \frac{1}{2}$ and $\alpha+\beta > \frac{1}{2}$ and $0<s \leq \frac{1}{2}$, $$f(\alpha,\beta) := (1+s)\alpha + (1+s)\beta - \alpha^2 - \beta^2 \geq \tfrac{1}{4} + \tfrac{3}{2}s - 2s^2.$$
We do this using calculus of two variables. Any minimum point is either at a critical point, along the boundary of the defined region, or at a corner point. It is sufficient to show that $f(\alpha,\beta)$ evaluates to $\frac{1}{4} + \frac{3}{2}s - 2s^2$ at the minimum point.

For any critical point, the second partial derivative test shows that it is a local maximum:
\begin{equation*}
f_{\alpha\alpha}(\alpha,\beta) = -2 \qquad
f_{\beta\beta}(\alpha,\beta) = -2 \qquad
f_{\alpha\beta}(\alpha,\beta) = 0.
\end{equation*}
Hence
\begin{align*}
D(\alpha,\beta) &= f_{\alpha\alpha}(\alpha,\beta)f_{\beta\beta}(\alpha,\beta) - (f_{\alpha\beta}(\alpha,\beta))^2 = 4 > 0. 
\end{align*}
Since $f_{\alpha\alpha}(\alpha,\beta) < 0$, this shows any critical point is a local maximum.

Along the boundary of the region, we consider functions of one variable. However, along most of the boundary, either $\alpha$ or $\beta$ is constant (either $s$ or $\frac{1}{2}$), and in such cases our one variable functions are equivalent to either $f_{\alpha,\alpha}$ or $f_{\beta,\beta}$. By the second derivative test any critical point will be a local maximum.

Slightly more care is required along the boundary defined by $\alpha + \beta = \frac{1}{2}.$ 

An easy rearrangement gives $f(\alpha,\beta) = (1+s)(\alpha+\beta) - \alpha^2 - \beta^2$. Then 
$$
f(\alpha,\tfrac{1}{2} - \alpha) = (1+s)\tfrac{1}{2} - \alpha^2 - (\tfrac{1}{2} - \alpha)^2 = \tfrac{1}{4} + \tfrac{1}{2}s + \alpha - 2\alpha^2.
$$
Interpreting the above as a function in one variable, the second derivative test shows any critical point along the boundary is a local maximum.

All that remains is to consider the corner points; the smallest evaluation at a corner will be the minimum of $f(\alpha,\beta)$ in the given region. The corner points are $(\frac{1}{2},\frac{1}{2}),(\frac{1}{2},s),(\frac{1}{2}-s,s),(s,\frac{1}{2}-s)$ and $(s,\frac{1}{2})$. Given that $f(\alpha,\beta) = f(\beta,\alpha)$, it suffices to check the following three points.
\begin{align*}
f(\tfrac{1}{2},\tfrac{1}{2}) &= (1+s)\tfrac{1}{2} + (1+s)\tfrac{1}{2} - \tfrac{1}{4}  - \tfrac{1}{4} = 1 + s - \tfrac{1}{2} = \tfrac{1}{2}+s \\
f(\tfrac{1}{2},s) &= (1+s)\tfrac{1}{2} + (1+s)s - \tfrac{1}{4}  - s^2 = \tfrac{1}{4} + \tfrac{3}{2}s \\
f(\tfrac{1}{2} - s,s) &= (1+s)\tfrac{1}{2} - (\tfrac{1}{2}-s)^2 - s^2  = \tfrac{1}{2} + \tfrac{1}{2}s - \tfrac{1}{4} + s - s^2 - s^2 = \tfrac{1}{4} + \tfrac{3}{2}s - 2s^2
\end{align*}
If $\frac{1}{4} + \frac{3}{2}s > \frac{1}{2} + s$, then $s > \frac{1}{2}$. 
Given that $s \leq \frac{1}{2}$, it follows that $f(\frac{1}{2},\frac{1}{2}) \geq f(\frac{1}{2},s)$. As $s>0$, it follows $f(\frac{1}{2}-s,s) < f(\frac{1}{2},s)$. Hence $f(\alpha,\beta)$ is minimal at $(\frac{1}{2}-s,s)$, and so $f(\alpha,\beta) \geq \frac{1}{4} + \frac{3}{2}s - 2s^2.$

\section{Appendix B}
\label{section:fprimemin}

Here we prove that $$h(\alpha,\beta) := (1+s)\alpha - \alpha^2 + (1+s)\beta - \beta^2 - \alpha\beta \geq
\begin{cases}
\frac{1}{4}+s &\text{ when $\delta(G)$ is even}\\
\frac{1}{4}+s-\frac{1}{4}s^2 &\text{ when $\delta(G)$ is odd.}
\end{cases}$$
given that $0<s \leq \alpha,\beta \leq \frac{1}{2}$ and that $$\alpha + \beta \geq
\begin{cases}
\frac{1}{2} + s &\text{ when $\delta(G)$ is even}\\
\frac{1}{2} + \frac{1}{2}s &\text{ when $\delta(G)$ is odd}. 
\end{cases}$$

For any critical point, the second partial derivative test shows that it is a local maximum:
\begin{equation*}
h_{\alpha\alpha}(\alpha,\beta) = -2 \qquad
h_{\beta\beta}(\alpha,\beta) = -2 \qquad
h_{\alpha\beta}(\alpha,\beta) = -1.
\end{equation*}
Hence
\begin{align*}
D(\alpha,\beta) &= h_{\alpha\alpha}(\alpha,\beta)h_{\beta\beta}(\alpha,\beta) - (h_{\alpha\beta}(\alpha,\beta))^2 = 3 > 0. 
\end{align*}
Since $h_{\alpha\alpha}(\alpha,\beta) < 0$, this shows any critical point is a local maximum.

Along the boundary of the region, we consider functions of one variable. However, along most of the boundary, either $\alpha$ or $\beta$ is constant (either $s$ or $\frac{1}{2}$), and in such cases our one variable functions are equivalent to either $h_{\alpha,\alpha}$ or $h_{\beta,\beta}$. By the second derivative test any critical point will be a local maximum.

Slightly more care is required along the boundary defined by $$\alpha + \beta =
\begin{cases}
\frac{1}{2} + s &\text{ when $\delta(G)$ is even}\\
\frac{1}{2} + \frac{1}{2}s &\text{ when $\delta(G)$ is odd}.
\end{cases}$$

An easy rearrangement gives $h(\alpha,\beta) = (1+s)(\alpha+\beta) - \alpha^2 - \beta(\alpha + \beta)$. Then 
\begin{align*}
h(\alpha,\tfrac{1}{2} + s - \alpha) &= (1+s)(\tfrac{1}{2} + s) - \alpha^2 - (\tfrac{1}{2} + s - \alpha)(\tfrac{1}{2} + s) \\
h(\alpha,\tfrac{1}{2} + \tfrac{1}{2}s - \alpha) &= (1+s)(\tfrac{1}{2} + \tfrac{1}{2}s) - \alpha^2 - (\tfrac{1}{2} + \tfrac{1}{2}s - \alpha)(\tfrac{1}{2} + \tfrac{1}{2}s).
\end{align*}
Interpreting the above as functions in one variable, the second derivative test shows any critical point along the boundary is a local maximum.

All that remains is to consider the corner points; the smallest evaluation at a corner will be the minimum of $h(\alpha,\beta)$ in the given region. When $\delta(G)$ is even, the corner points are $(\frac{1}{2},\frac{1}{2}),(\frac{1}{2},s)$ and $(s,\frac{1}{2})$. When $\delta(G)$ is odd, the corner points are $(\frac{1}{2},\frac{1}{2}),(\frac{1}{2},s),(s,\frac{1}{2}),(s,\frac{1}{2}-\frac{1}{2}s)$ and $(\frac{1}{2}-\frac{1}{2}s,s)$. Given that $h(\alpha,\beta) = h(\beta,\alpha)$, it suffices to check the following three points.
\begin{align*}
h(\tfrac{1}{2},\tfrac{1}{2}) &= (1+s)\tfrac{1}{2} - \tfrac{1}{4} + (1+s)\tfrac{1}{2} - \tfrac{1}{4} - \tfrac{1}{4} = 1 + s - \tfrac{3}{4} = \tfrac{1}{4}+s \\
h(\tfrac{1}{2},s) &= (1+s)\tfrac{1}{2} - \tfrac{1}{4} + (1+s)s - s^2 - \tfrac{1}{2}s = \tfrac{1}{2} + \tfrac{1}{2}s - \tfrac{1}{4} + s + s^2 - s^2 - \tfrac{1}{2}s = \tfrac{1}{4}+s \\
h(\tfrac{1}{2} - \tfrac{1}{2}s,s) &= (1+s)(\tfrac{1}{2} - \tfrac{1}{2}s) - (\tfrac{1}{2} - \tfrac{1}{2}s)^2 + (1+s)s - s^2 - (\tfrac{1}{2} - \tfrac{1}{2}s)s \\
&= (\tfrac{1}{2} - \tfrac{1}{2}s)-(\tfrac{1}{2} - \tfrac{1}{2}s)^2 + s \\
&= (\tfrac{1}{2} - \tfrac{1}{2}s)(\tfrac{1}{2} + \tfrac{1}{2}s) + s \\
&= \tfrac{1}{4} - \tfrac{1}{4}s^2 + s.
\end{align*}
Since $s>0$, it follows that $h(\frac{1}{2}-\frac{1}{2}s,s) < h(\frac{1}{2},\frac{1}{2}),h(\frac{1}{2},s)$, which proves our result.

\section{Appendix C}
\label{sec:bipartworking}

Recall $\alpha_i = x_iy_i -z_i$ for $i=1,2,3$. Choose $x_1,y_1,z_1,x_2,y_2,z_2, x_3,y_3,z_3$ to maximise \begin{align}\label{eq:maxthis}\alpha_1 + \alpha_2 + \alpha_3 \end{align} subject to the following \emph{basic} constraints:
\begin{align*}
0 \leq x_i,y_i \,\,\forall i \qquad 0 \leq z_i \leq x_iy_i \,\,\forall i \qquad
x_1 + x_2 + x_3 =1 \qquad y_1 + y_2 + y_3=1
\end{align*}
and also the following \emph{balance} constraints:
\begin{align}
\alpha_1 \leq \alpha_2 + \alpha_3 \label{eq:balance1}\\ 
\alpha_2 \leq \alpha_3 + \alpha_1 \label{eq:balance2}\\ 
\alpha_3 \leq \alpha_1 + \alpha_2 \label{eq:balance3}
\end{align}
We prove that $\alpha_1 + \alpha_2 + \alpha_3 \leq \frac{1}{2}$.

\begin{claim}
\label{claim:no2ineq}
At most one of the balance constraints is a strict inequality.
\end{claim}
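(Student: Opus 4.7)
The plan is by contradiction. Assume the optimum is attained at a point with two balance constraints strict; by symmetry say \eqref{eq:balance1} and \eqref{eq:balance2}. I will show the objective at such a point must be strictly less than $\tfrac{1}{2}$, contradicting the fact that the value $\tfrac{1}{2}$ is attained at the feasible point $x_1=y_1=x_2=y_2=\tfrac{1}{2}$ with other variables $0$ (giving $\alpha_1=\alpha_2=\tfrac{1}{4}$, $\alpha_3=0$, and satisfying all basic and balance constraints).

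First, if constraint $(i)$ is strict and $z_i>0$, decreasing $z_i$ by a small $\epsilon>0$ preserves all constraints (the other two balance constraints only gain slack, since $\alpha_i$ appears only on their right-hand sides) and strictly increases the objective. So at the assumed optimum $z_1=z_2=0$, and hence $\alpha_1=x_1y_1$, $\alpha_2=x_2y_2$. I now split on whether \eqref{eq:balance3} is strict or tight.

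\textbf{Case 1: \eqref{eq:balance3} is also strict.} Then $z_3=0$ as well, and the objective equals $\sum_i x_iy_i$. A boundary point (some $x_i$ or $y_i$ zero) immediately makes two balance constraints incompatible: for instance $x_3=0$ forces $\alpha_3=0$, whence strictness of \eqref{eq:balance1}, \eqref{eq:balance2} yields $\alpha_1<\alpha_2$ and $\alpha_2<\alpha_1$; the other boundary cases are symmetric. Hence all $x_i,y_i>0$, and first-order stationarity of $\sum x_iy_i$ on the product simplex forces $x=y=(\tfrac{1}{3},\tfrac{1}{3},\tfrac{1}{3})$, with objective $\tfrac{1}{3}$. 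But the perturbation $x=y=\bigl(\tfrac{1}{3}+\delta,\tfrac{1}{3}-\tfrac{\delta}{2},\tfrac{1}{3}-\tfrac{\delta}{2}\bigr)$ remains feasible with all three balance constraints strict for small $\delta>0$ (the binding one, \eqref{eq:balance1}, reduces to $\tfrac{4\delta}{3}+\tfrac{\delta^2}{2}<\tfrac{1}{9}$), and raises the objective to $\tfrac{1}{3}+\tfrac{3}{2}\delta^2$, so this candidate is not a local maximum.

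\textbf{Case 2: \eqref{eq:balance3} is tight.} Then $\alpha_3=\alpha_1+\alpha_2$, so $\sum\alpha_i=2(\alpha_1+\alpha_2)=2(x_1y_1+x_2y_2)$; write $S:=x_1y_1+x_2y_2$. Strictness of \eqref{eq:balance1} combined with tightness of \eqref{eq:balance3} gives $\alpha_2>0$, and symmetrically $\alpha_1>0$, so $x_1,y_1,x_2,y_2>0$. From $\alpha_3\le x_3y_3$ and $\alpha_3=S$ we have $S\le x_3y_3$; and from $S=(x_1+x_2)(y_1+y_2)-(x_1y_2+x_2y_1)<(1-x_3)(1-y_3)$, strict because $x_1y_2+x_2y_1>0$. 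Multiplying these two bounds, $S^2<x_3y_3\,(1-x_3)(1-y_3)\le \tfrac{1}{4}\cdot\tfrac{1}{4}$, using $t(1-t)\le\tfrac{1}{4}$ for $t\in[0,1]$. Hence $S<\tfrac{1}{4}$ and $\sum\alpha_i<\tfrac{1}{2}$, contradicting optimality.

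The main obstacle is Case~1, where the non-concavity of $\sum x_iy_i$ means one must both identify the interior first-order critical point and then rule it out via a second-order perturbation; the boundary sub-case and all of Case~2 are clean algebraic manipulations.
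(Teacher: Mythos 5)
Your proof is correct, but it takes a genuinely different route from the paper's. The paper gives a single uniform perturbation argument: after assuming WLOG that constraints \eqref{eq:balance2} and \eqref{eq:balance3} are strict and that $x_2+y_2 \geq x_3+y_3$, it increases $x_2,y_2$ by $\epsilon$ and decreases $x_3,y_3$ by $\epsilon$, then verifies all basic and balance constraints survive while the objective strictly increases --- a direct contradiction with optimality, requiring no case split and no reference to the optimal value $\tfrac{1}{2}$. You instead first force $z_1=z_2=0$ (a reduction the paper also uses, though later) and split on whether \eqref{eq:balance3} is tight. Your Case~2 is a clean algebraic bound: combining $S \leq x_3y_3$ with $S < (1-x_3)(1-y_3)$ gives $S^2 < \tfrac{1}{16}$, hence $\sum\alpha_i = 2S < \tfrac{1}{2}$, strictly below the value at the explicit feasible point $x_1=x_2=y_1=y_2=\tfrac{1}{2}$. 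Your Case~1 is heavier: after ruling out boundary points, you invoke first-order conditions to pin down the unique interior critical point $(\tfrac{1}{3},\ldots,\tfrac{1}{3})$ (which is legitimate, since the active constraints at such a point are all linear, so no constraint qualification is needed) and then a second-order perturbation to show it is a saddle. Both cases check out. The trade-off is that the paper's argument is shorter, self-contained, and avoids any appeal to KKT-type reasoning; your Case~2 inequality $S^2 < x_3y_3(1-x_3)(1-y_3)\le\tfrac{1}{16}$ is arguably slicker than anything in the paper's proof, but the calculus-based Case~1 makes your overall argument more involved.
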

\begin{proof}
Assume for the sake of a contradiction that two of the balance constraints are strict inequalities, without loss of generality $\alpha_2 < \alpha_3 + \alpha_1$ and $\alpha_3 < \alpha_1 + \alpha_2$. Without loss of generality, $x_2 + y_2 \geq x_3 + y_3$. If $x_3 = 0$ then $\alpha_3 = x_3y_3 - z_3 \leq 0$, and so $\alpha_3 = 0$. Similarly, if $y_3=0$ then $\alpha_3=0$, and if $z_3 = x_3y_3$ then $\alpha_3 = 0$. However if $\alpha_3 = 0$ then the first two balance constraints give that $\alpha_1 \leq \alpha_2$ and $\alpha_2 \leq \alpha_1$. But this means that $\alpha_1 = \alpha_2$ and as such our assumption that $\alpha_2 < \alpha_3 + \alpha_1$ does not hold. Hence $x_3,y_3 > 0$ and $z_3 < x_3y_3$. 
Choose $\epsilon > 0$ such that $ \epsilon \leq x_3,y_3, \frac{x_3y_3 - z_3}{x_3+y_3}, \frac{\alpha_1+\alpha_3 - \alpha_2}{x_2 + y_2 + x_3 + y_3}$.

Define $x_2' = x_2+ \epsilon$, $y_2' = y_2 + \epsilon$, $x_3' = x_3 - \epsilon$ and $y_3' = y_3 - \epsilon$. We now show that by replacing $x_2$ with $x_2'$ and so on, we contradict our assumption that  $x_1,y_1,z_1, \dots, x_3,y_3,z_3$ maximise $\alpha_1 + \alpha_2 + \alpha_3$ with respect to all our constraints. 

First, check the basic constraints. By the choice of $\epsilon$, we have $x_3-\epsilon,y_3 - \epsilon \geq 0$. Also, $(x_3 - \epsilon)(y_3 - \epsilon) = x_3y_3 -\epsilon(x_3 + y_3) + \epsilon^2 \geq x_3y_3 - (x_3y_3 - z_3) +\epsilon^2 > z_3$, as required. All other basic constraints hold trivially.

Now we check the balance constraints. First consider \eqref{eq:balance1}. We prove this by contradiction. Suppose that $x_1y_1 - z_1 > x_2'y_2' - z_2 + x_3'y_3' - z_3.$ Thus
\begin{align*}
\alpha_1 = x_1y_1 - z_1 &> (x_2 + \epsilon)(y_2 + \epsilon) - z_2 + (x_3 - \epsilon)(y_3 - \epsilon) - z_3 \\
&= x_2y_2 - z_2 +x_3y_3 - z_3 + \epsilon(x_2 + y_2 + \epsilon - x_3 - y_3 + \epsilon) \\
&= \alpha_2 + \alpha_3 + \epsilon(x_2 + y_2 - x_3 - y_3 + 2\epsilon).
\end{align*} 
However, since $x_2 + y_2 \geq x_3 + y_3$, it follows that $\alpha_1 > \alpha_2 + \alpha_3$, which contradicts the fact that $x_1,y_1,z_1,\dots,x_3,y_3,z_3$ satisfy the balance constraints. To prove \eqref{eq:balance2}, suppose that $x_2'y_2' - z_2 > x_1y_1 - z_1 + x_3'y_3' - z_3.$
Thus
\begin{align*}
(x_2 + \epsilon)(y_2+\epsilon) - z_2 &> x_1y_1 - z_1 + (x_3 - \epsilon)(y_3 - \epsilon) - z_3 \\
x_2y_2 - z_2 + \epsilon(x_2 + y_2 + \epsilon) &> x_1y_1 - z_1 + x_3y_3-z_3 - \epsilon(x_3 + y_3 - \epsilon)\\
\alpha_2 + \epsilon(x_2 + y_2 + \epsilon) &> \alpha_1 + \alpha_3 - \epsilon(x_3 + y_3 - \epsilon) \\
\epsilon(x_2 + y_2 + x_3 + y_3) &> \alpha_1+\alpha_3 - \alpha_2.
\end{align*}
This contradicts our choice of $\epsilon$. Now consider \eqref{eq:balance3} and suppose that
$x_3'y_3' - z_3 > x_1y_1 - z_1 + x_2'y_2' - z_2.$
Thus \begin{align*}
x_3y_3-z_3 - \epsilon(x_3 + y_3 - \epsilon) &> x_1y_1-z_1 + x_2y_2 - z_2 + \epsilon(x_2 + y_2 + \epsilon) \\
\alpha_3 &> \alpha_1 + \alpha_2 + \epsilon(x_2+y_2+x_3+y_3).
\end{align*}
Since $\epsilon(x_2+y_2+x_3+y_3) \geq 0$, this again contradicts our choice of  $x_1,y_1,z_1,\dots,x_3,y_3,z_3$.

Finally, we now show that replacing $x_2$ with $x_2'$ and so on increases $\alpha_1 + \alpha_2 + \alpha_3$.
\begin{align*}
&\phantom{=}\,x_1y_1 - z_1 + x_2'y_2' - z_2 + x_3'y_3' - z_3 \\
&=\alpha_1 + \alpha_2 + \epsilon(x_2 + y_2 + \epsilon) + \alpha_3 - \epsilon(x_3 + y_3 - \epsilon) \\
&=\alpha_1 + \alpha_2 + \alpha_3 + \epsilon(x_2 + y_2 + \epsilon - x_3 - y_3 + \epsilon)
\end{align*}
This is a strict improvement since $x_2 + y_2 \geq x_3 + y_3$ and $2\epsilon > 0$.
\end{proof}

Thus, at least two of the balance constraints are equalities. Without loss of generality, $\alpha_1 = \alpha_2 + \alpha_3$ and $\alpha_2 = \alpha_3 + \alpha_1$. This forces $\alpha_3 = 0$.

If $z_1,z_2 >0$ then let $\epsilon = \min\{z_1,z_2\}$. If we replace $z_1,z_2$ with $z_1-\epsilon,z_2-\epsilon$ this maintains all constraints and increases $\alpha_1+\alpha_2+\alpha_3$. (We omit the proof of this as it is clear.) Thus without loss of generality $z_2=0$.

Now replace the balance constraints with the following two equivalent constraints:
\begin{align}
x_1y_1 - z_1 = x_2y_2 \label{eq:balance4} \\
x_3y_3 = z_3 \label{eq:balance5}
\end{align}

From this, it also follows that maximising \eqref{eq:maxthis} is equivalent to maximising $2x_2y_2.$

\begin{claim}
$z_1 = 0.$
\end{claim}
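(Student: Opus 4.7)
The plan is to mimic the perturbation arguments used earlier in this appendix (most closely, the one that established $z_2 = 0$). I will assume for contradiction that $z_1 > 0$ at the chosen maximizer, and then exhibit a small feasible modification that strictly increases the objective, which at this stage equals $2 x_2 y_2$.

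Two easy preliminaries make the bookkeeping clean. First, since $0 \leq z_1 \leq x_1 y_1$, the assumption $z_1 > 0$ forces $x_1 > 0$ and $y_1 > 0$. Second, the explicit feasible tuple $x_1 = y_1 = x_2 = y_2 = \tfrac{1}{2}$, $x_3 = y_3 = 0$, all $z_i = 0$, satisfies every basic and balance constraint and yields $\alpha_1 + \alpha_2 + \alpha_3 = \tfrac{1}{2}$, so at the maximizer $2 x_2 y_2 \geq \tfrac{1}{2}$ and in particular $x_2, y_2 > 0$.

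The perturbation I would use is: for sufficiently small $\delta > 0$, replace $x_1$ by $x_1 - \delta$, replace $x_2$ by $x_2 + \delta$, and replace $z_1$ by $z_1 - \delta(y_1 + y_2)$, leaving all other variables untouched. A one-line computation shows that $\alpha_1$ and $\alpha_2$ each increase by exactly $\delta y_2$, so the equality $\alpha_1 = \alpha_2$ is preserved, $\alpha_3 = 0$ is unchanged, and all three balance constraints continue to hold; meanwhile the objective strictly grows by $2 \delta y_2 > 0$, contradicting maximality.

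The remaining work is to check feasibility. Choosing $\delta \leq \min\{x_1,\, z_1/(y_1 + y_2)\}$ (a positive quantity since $y_1 > 0$ and $z_1 > 0$) preserves $x_1', z_1' \geq 0$; the $x$-sum and $y$-sum constraints are unaffected; and the one non-trivial check $z_1' \leq x_1' y_1'$ reduces after expansion to $-\delta y_2 \leq \alpha_1$, which is immediate from $\alpha_1 \geq 0$. The main obstacle is really just identifying the correctly coupled perturbation---naively decreasing $z_1$ alone would break $\alpha_1 = \alpha_2$, so one must simultaneously shift mass from $x_1$ to $x_2$ at the precise rate $y_1 + y_2$ per unit of $z_1$ so that $\alpha_1'$ and $\alpha_2'$ grow in lock-step; once this coupling is identified, every verification is routine.
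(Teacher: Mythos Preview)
Your argument is correct and follows the same contradiction-via-perturbation template as the paper, but with a genuinely different perturbation. The paper moves mass symmetrically in both coordinates, setting $(x_1,y_1,x_2,y_2,z_1)\mapsto(x_1-\epsilon,\,y_1-\epsilon,\,x_2+\epsilon,\,y_2+\epsilon,\,z_1-2\epsilon)$, and then verifies through a multi-line expansion that the equality $x_1'y_1'-z_1'=x_2'y_2'$ survives. You instead move mass only in the $x$-direction, $(x_1,x_2,z_1)\mapsto(x_1-\delta,\,x_2+\delta,\,z_1-\delta(y_1+y_2))$, with the $z_1$-decrement calibrated so that $\alpha_1$ and $\alpha_2$ each rise by exactly $\delta y_2$; this makes the preservation of \eqref{eq:balance4} a one-line check rather than an algebraic computation. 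Your version also sidesteps the paper's use of $x_1+y_1=2-x_2-y_2$ in that computation (which would require $x_3+y_3=0$, something not established at this point). The preliminary step exhibiting an explicit feasible point of value $\tfrac12$ to force $y_2>0$ at the maximizer is a clean way to guarantee the increase is strict.
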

\begin{proof}
Assume that $z_1 > 0$. Also assume that $2x_2y_2 >0$ (for otherwise the entire result is proven). If $x_1 = 0$ or $y_1 = 0$, then $x_2y_2 = -z_1 < 0$, and so we may assume $x_1,y_1 > 0$. Choose $\epsilon>0$ such that $x_1-\epsilon,y_1-\epsilon,z_1 - 2\epsilon \geq 0$. As in Claim~\ref{claim:no2ineq}, we replace some choices of $x_1,y_1,z_1,\dots,x_3,y_3,z_3$ and show that our initial set of choices was not optimal.
Let $x_1' = x_1 - \epsilon, y_1' = y_1 -\epsilon, x_2' = x_2 + \epsilon, y_2' = y_2 + \epsilon, z_1' = z_1 - 2\epsilon.$ It is clear replacing $x_1$ with $x_1'$ and so on still satisfies the basic constraints, and increases $2x_2y_2$. The only difficult step is checking \eqref{eq:balance4}.
\begin{align*}
&\phantom{=}x_1'y_1' - z_1' \\
&= (x_1 - \epsilon)(y_1 - \epsilon) - z_1 + 2\epsilon \\
&= x_1y_1 -\epsilon(x_1+y_1) + \epsilon^2 - z_1 + 2\epsilon \\
&= x_1y_1 -z_1 -\epsilon(2-x_2 - y_2) + \epsilon^2+ 2\epsilon \\
&= x_2y_2 +\epsilon(x_2 + y_2) + \epsilon^2 \\
&= (x_2+\epsilon)(y_2 + \epsilon) \\
&= x_2'y_2'
\end{align*}
Hence \eqref{eq:balance4} still holds, and thus our choice of $x_1,y_1,z_1,\dots,x_3,y_3,z_3$ was not optimal, a contradiction.
\end{proof}

Thus $x_1y_1 = x_2y_2$. Define $c,d \in [-\frac{1}{2},\frac{1}{2}]$ such that $x_2 = \frac{1}{2}+c$ and $y_2 = \frac{1}{2}+d$. Thus $x_1 \leq \frac{1}{2}-c, y_1 \leq \frac{1}{2}-d$. Hence $(\frac{1}{2}-c)(\frac{1}{2}-d) \geq (\frac{1}{2}+c)(\frac{1}{2}+d)$, and so $c \leq -d$. Finally, this means that $$\alpha_1 + \alpha_2 + \alpha_3 = 2x_2y_2 = 2(\tfrac{1}{2}+c)(\tfrac{1}{2}+d) \leq 2(\tfrac{1}{2}-d)(\tfrac{1}{2}+d) = \tfrac{1}{2} - 2d^2 \leq \tfrac{1}{2}.$$

\bibliographystyle{abbrvnat}
\bibliography{genlinebib}

\begin{thebibliography}{23}
\providecommand{\natexlab}[1]{#1}
\providecommand{\url}[1]{\texttt{#1}}
\expandafter\ifx\csname urlstyle\endcsname\relax
  \providecommand{\doi}[1]{doi: #1}\else
  \providecommand{\doi}{doi: \begingroup \urlstyle{rm}\Url}\fi

\bibitem[Atserias(2008)]{atserias}
A.~Atserias.
\newblock On digraph coloring problems and treewidth duality.
\newblock \emph{Eur. J. Comb.}, 29\penalty0 (4):\penalty0 796--820, 2008.
\newblock \doi{10.1016/j.ejc.2007.11.004}.

\bibitem[Bhatt and Leighton(1984)]{Bhatt}
S.~N. Bhatt and F.~T. Leighton.
\newblock A framework for solving {VLSI} graph layout problems.
\newblock \emph{Journal of Computer and System Sciences}, 28\penalty0
  (2):\penalty0 300 -- 343, 1984.
\newblock \doi{10.1016/0022-0000(84)90071-0}.

\bibitem[Bienstock(1990)]{Bienstock}
D.~Bienstock.
\newblock On embedding graphs in trees.
\newblock \emph{J. Combin. Theory Ser. B}, 49\penalty0 (1):\penalty0 103--136,
  1990.
\newblock \doi{10.1016/0095-8956(90)90066-9}.

\bibitem[Bodlaender(1993)]{Bodlaender-AC93}
H.~L. Bodlaender.
\newblock A tourist guide through treewidth.
\newblock \emph{Acta Cybernet.}, 11\penalty0 (1-2):\penalty0 1--21, 1993.
\newblock URL \url{www.cs.uu.nl/research/techreps/repo/CS-1992/1992-12.pdf}.

\bibitem[C{\u{a}}linescu et~al.(2003)C{\u{a}}linescu, Fernandes, and
  Reed]{calinescu}
G.~C{\u{a}}linescu, C.~G. Fernandes, and B.~Reed.
\newblock Multicuts in unweighted graphs and digraphs with bounded degree and
  bounded tree-width.
\newblock \emph{J. Algorithms}, 48\penalty0 (2):\penalty0 333--359, 2003.
\newblock \doi{10.1016/S0196-6774(03)00073-7}.

\bibitem[Chung(1985)]{cut3}
F.~R.~K. Chung.
\newblock On the cutwidth and the topological bandwidth of a tree.
\newblock \emph{SIAM J. Algebraic Discrete Methods}, 6\penalty0 (2):\penalty0
  268--277, 1985.
\newblock \doi{10.1137/0606026}.

\bibitem[DeVos et~al.(2014)DeVos, Dvo\v{r}\'{a}k, Fox, McDonald, Mohar, and
  Scheide]{immersion}
M.~DeVos, Z.~Dvo\v{r}\'{a}k, J.~Fox, J.~McDonald, B.~Mohar, and D.~Scheide.
\newblock A minimum degree condition forcing complete graph immersion.
\newblock \emph{Combinatorica}, 34\penalty0 (3):\penalty0 279--298, 2014.
\newblock \doi{10.1007/s00493-014-2806-z}.

\bibitem[Gavril(1977)]{fanica}
F.~Gavril.
\newblock Some {NP}-complete problems on graphs.
\newblock In \emph{Proceedings of the 11th Conference on Information Sciences
  and Systems, Johns Hopkins University, Baltimore, MD}, pages 91--95, 1977.
\newblock URL
  \url{http://www.cs.technion.ac.il/users/wwwb/cgi-bin/tr-info.cgi/2011/CS/CS-2011-05}.

\bibitem[Golovach(1993)]{Golovach}
P.~A. Golovach.
\newblock The cut width of a graph and the vertex separation number of the line
  graph.
\newblock \emph{Discrete Math. Appl.}, 3\penalty0 (5):\penalty0 517--521, 1993.
\newblock \doi{10.1515/dma.1993.3.5.517}.

\bibitem[Grohe and Marx(2009)]{marxgrohe}
M.~Grohe and D.~Marx.
\newblock On tree width, bramble size, and expansion.
\newblock \emph{J. Combinat. Theory, Series B}, 99\penalty0 (1):\penalty0
  218--228, 2009.
\newblock \doi{10.1016/j.jctb.2008.06.004}.

\bibitem[Harvey(2014)]{mythesis}
D.~J. Harvey.
\newblock \emph{On Treewidth and Graph Minors}.
\newblock PhD thesis, The University of Melbourne, 2014.
\newblock URL \url{http://hdl.handle.net/11343/40752}.

\bibitem[Harvey and Wood(2013)]{mypttt}
D.~J. Harvey and D.~R. Wood.
\newblock Parameters tied to treewidth.
\newblock 2013.
\newblock URL \url{http://arxiv.org/abs/1312.3401}.

\bibitem[Harvey and Wood(2014)]{mylinegraph-jgt}
D.~J. Harvey and D.~R. Wood.
\newblock Treewidth of the line graph of a complete graph.
\newblock \emph{J. Graph Theory}, 2014.
\newblock \doi{10.1002/jgt.21813}.

\bibitem[Kinnersley(1992)]{kinn}
N.~G. Kinnersley.
\newblock The vertex separation number of a graph equals its path-width.
\newblock \emph{Information Processing Letters}, 42\penalty0 (6):\penalty0 345
  -- 350, 1992.
\newblock \doi{10.1016/0020-0190(92)90234-M}.

\bibitem[Korach and Solel(1993)]{cut4}
E.~Korach and N.~Solel.
\newblock Tree-width, path-width, and cutwidth.
\newblock \emph{Discrete Appl. Math.}, 43\penalty0 (1):\penalty0 97--101, 1993.
\newblock \doi{10.1016/0166-218X(93)90171-J}.

\bibitem[Lucena(2007)]{lucenabramble}
B.~Lucena.
\newblock Achievable sets, brambles, and sparse treewidth obstructions.
\newblock \emph{Discrete Appl. Math.}, 155\penalty0 (8):\penalty0 1055--1065,
  2007.
\newblock \doi{10.1016/j.dam.2006.11.006}.

\bibitem[Marx(2010)]{marx}
D.~Marx.
\newblock Can you beat treewidth?
\newblock \emph{Theory of Computing}, 6\penalty0 (5):\penalty0 85--112, 2010.
\newblock \doi{10.4086/toc.2010.v006a005}.

\bibitem[Matsubayashi and Ueno(1999)]{hypercube}
A.~Matsubayashi and S.~Ueno.
\newblock Small congestion embedding of graphs into hypercubes.
\newblock \emph{Networks}, 33\penalty0 (1):\penalty0 71--77, 1999.
\newblock \doi{10.1002/(SICI)1097-0037(199901)33:1<71::AID-NET5>3.0.CO;2-3}.

\bibitem[Robertson and Seymour(1983--2012)]{RS-GraphMinors}
N.~Robertson and P.~D. Seymour.
\newblock Graph minors {I--XXIII}.
\newblock \emph{J. Combinat. Theory, Series B}, 1983--2012.

\bibitem[Robertson and Seymour(1986)]{RS-GraphMinorsII-JAlg86}
N.~Robertson and P.~D. Seymour.
\newblock Graph minors. {II}. {A}lgorithmic aspects of tree-width.
\newblock \emph{J. Algorithms}, 7\penalty0 (3):\penalty0 309--322, 1986.
\newblock \doi{10.1016/0196-6774(86)90023-4}.

\bibitem[Saks()]{Saks}
M.~Saks.
\newblock Personal communication to {D. Bienstock}, reported in
  \citep{Bienstock}.

\bibitem[Thilikos et~al.(2005{\natexlab{a}})Thilikos, Serna, and
  Bodlaender]{cut1}
D.~M. Thilikos, M.~Serna, and H.~L. Bodlaender.
\newblock Cutwidth. {I}. {A} linear time fixed parameter algorithm.
\newblock \emph{J. Algorithms}, 56\penalty0 (1):\penalty0 1--24,
  2005{\natexlab{a}}.
\newblock \doi{10.1016/j.jalgor.2004.12.001}.

\bibitem[Thilikos et~al.(2005{\natexlab{b}})Thilikos, Serna, and
  Bodlaender]{cut2}
D.~M. Thilikos, M.~Serna, and H.~L. Bodlaender.
\newblock Cutwidth. {II}. {A}lgorithms for partial {$w$}-trees of bounded
  degree.
\newblock \emph{J. Algorithms}, 56\penalty0 (1):\penalty0 25--49,
  2005{\natexlab{b}}.
\newblock \doi{10.1016/j.jalgor.2004.12.003}.

\end{thebibliography}

\end{document}